\documentclass[12pt, reqno, a4paper]{amsart}
\usepackage{amsfonts, amsmath, amsthm, amstext, amssymb}
\usepackage{mathtools}
\usepackage{url}
\usepackage{paralist}
\usepackage[english]{babel}
\usepackage{framed}
\usepackage{amsrefs}
\usepackage[marginratio=1:1,height=660pt,width=410pt,tmargin=100pt]{geometry}
\usepackage{lipsum}
\usepackage{layout}
\usepackage{hyperref}
\usepackage{mathrsfs}

\DeclareMathOperator{\CC} {\mathbb{C}}

\DeclareMathOperator{\QQ} {\mathbb{Q}}
\DeclareMathOperator{\RR} {\mathbb{R}}

\DeclareMathOperator{\ZZ} {\mathbb{Z}}

\DeclareMathOperator{\bG} {\bf{G}}

\DeclareMathOperator{\bGL} {\bf{GL}}

\DeclareMathOperator{\Gal} {Gal}
\DeclareMathOperator{\Aut} {Aut}

\DeclareMathOperator{\Spec} {Spec}

\DeclareMathOperator{\poly} {poly}
\DeclareMathOperator{\rk} {rk}

\newtheorem{lemma}{Lemma}[section]
\newtheorem{thm}[lemma]{Theorem}

\theoremstyle{definition}\newtheorem{definition}[lemma]{Definition}
\theoremstyle{definition}\newtheorem{remark}[lemma]{Remark}
\theoremstyle{definition}
\theoremstyle{definition}
\theoremstyle{definition}

\newcommand{\mb}[1]{\mathbb{#1}}
\newcommand{\mc}[1]{\mathcal{#1}}
\newcommand{\mf}[1]{\mathfrak{#1}}
\newcommand{\ms}[1]{\mathscr{#1}}
\newcommand{\ra}{\rightarrow}

\newcommand{\ol}[1]{\overline{#1}}

\newcommand{\tb}[1]{\textbf{#1}}

\begin{document}

\title[Polylog-size coverings of rational points]{Tower of coverings and polylog-size coverings of rational points}
\author{Kenneth Chung Tak Chiu}
\email{kennethct.chiu@alumni.utoronto.ca}
\address{Department of Mathematics and New Cornerstone Science Laboratory, The University of Hong Kong.}
\begin{abstract}
We prove that a smooth irreducible proper variety over a number field admits polylogarithmic-size coverings of rational points of bounded height if it admits a tower of finite \'{e}tale coverings satisfying several conditions. 
One of the conditions is a height comparison on twists of the finite \'{e}tale covers.
We prove that these conditions hold for abelian varieties. 
\end{abstract}

\maketitle

\section{Introduction}\label{Introduction}
A major theme in Diophantine geometry is the study of the growth of the number of rational points of bounded height on varieties over a number field.
This paper is a continuation of a line of works by Ellenberg-Lawrence-Venkatesh \cite{ELV23}, Brunebarbe-Maculan \cite{BM23}, and the author \cite{Chiu22}.
The main result in Ellenberg-Lawrence-Venkatesh was stated in the setting of variations of Hodge structures, while the two other papers were respectively in the settings of varieties with large fundamental groups and variations of mixed Hodge structures. In the special case for a proper Shimura variety $\mc{S}$, it states that the number of rational points on $\mc{S}$ with height $\leq B$ is $O_{\mc{S}}(B^{\varepsilon})$ for any $\varepsilon>0$. 
The first step was to cover the rational points with height $\leq B$ by $O_{\mc{S}}(B^{\varepsilon})$ many irreducible subvarieties of codimension at least 1,
in other words, $\mc{S}$ admits a subpolynomial-size covering of rational points of bounded height.
This was done by constructing certain tower of finite \'{e}tale covers of $\mc{S}$ and applying Broberg's theorem to the twists of these covers by certain cocycles in Galois cohomology.
Broberg's theorem \cite{Bro04}, which builds on fundamental ideas of Bombieri-Pila \cite{BP89} and Heath-Brown \cite{Hea02}, is a uniform bound for the number of hypersurfaces of bounded degree needed to contain the rational points of bounded height on an arbitrary irreducible closed subvariety of fixed dimension and degree.

In this paper, we prove that a smooth irreducible proper variety over a number field admits polylogarithmic-size coverings of rational points of bounded height in the sense of Definition \ref{definition polylogarithmically sparse covering} below if it admits a tower satisfying certain assumptions, namely a height comparison on twists in the sense of Definition \ref{height comparison on twists} below, and polynomial growth of the size of certain Galois cohomology as the level of the tower goes up.
We prove that any abelian variety over a number field admits a tower satisfying these assumptions. 
For comparison, the Bombieri-Lang conjecture states that the variety admits a finite covering of rational points if it is of general-type.

\subsection{Polylogarithmic-size coverings}\label{Statements of results}
Let $\mc{X}$ be a smooth irreducible proper variety over a number field $E$ of dimension $n$. 
We write $\alpha=\poly_{X}(\beta)$ as a shorthand for $\alpha\leq P_X(\beta)$, where $X\mapsto P_X$ is a universally fixed mapping whose values are univariate polynomials with positive coefficients.

\begin{definition}\label{definition polylogarithmically sparse covering}
The variety $\mc{X}$ has a \emph{polylogarithmic-size covering of rational points} if for any $B>e$, the $E$-points of $\mathcal{X}$ with exponential Weil height at most $B$ can be covered by 
$\poly_{\mathcal{X}}(\log B)$ irreducible subvarieties over $E$ of dimension $< \dim \mathcal{X}$ and degree 
$\poly_{\mathcal{X}}(\log B)$ with respect to an ample line bundle on $\mc{X}$.
\end{definition}

Let $\{\mc{X}_m\}_{m\in \mathbb{N}}$ be an infinite tower of finite \'{e}tale geometrically Galois coverings $f_m:\mc{X}_m\to \mc{X}$ of $\mc{X}$.
Let $G_m:=\Aut((f_m)_{\ol{E}})$ be the geometric deck group of $f_m$.

\begin{definition}\label{Gamma prime power structure}
The tower $\{\mc{X}_m\}$ has \emph{$\Gamma_{p^m}$-structures} if there exist an infinite subgroup $\Gamma$ of $\textbf{GL}_k(\ZZ)$, where $k$ is a positive integer, and a prime number $p$ such that for all $m$, the geometric deck group $G_m$ is isomorphic to the image $\Gamma_{p^m}$ of $\Gamma$ under the mod $p^m$ map $\bGL_k(\ZZ)\rightarrow \bGL_k(\ZZ/p^m\ZZ)$.
\end{definition}

Suppose $\{\mc{X}_m\}$ is equipped with $\Gamma_{p^m}$-structures. 
For each $m$, we have a Galois action $\theta_m:\Gal(\ol{E}/E)\to \Aut(G_m)$ defined by 
$\theta_m(\sigma)(\gamma)=\sigma_{\mc{X}_m}\circ \gamma\circ\sigma_{\mc{X}_m}^{-1}$,
where $\sigma_{\mc{X}_m}$ is the automorphism on $(\mc{X}_m)_{\ol{E}}$ induced by $\sigma^{-1}$.
This action $\theta_m$ is a group homomorphism.

\begin{definition}\label{exceptional primes}
A finite set $T$ of primes of $E$ is \emph{exceptional} for the tower $\{\mc{X}_m\}$ if 
\begin{itemize}
\item $T$ contains all primes above $p$;
\item $\mathcal{X}$ spreads out to a smooth proper integral model $\ms{X}$ over the ring $\mathcal{O}_{E,T}$ of $T$-integers in $E$;
\item each $f_m$ extends to a finite \'{e}tale surjective morphism $\ms{X}_m\to \ms{X}$, where $\ms{X}_m$ is a smooth $\mc{O}_{E,T}$-model of $\mc{X}_m$.
\end{itemize}
\end{definition}

Suppose further that $\{\mc{X}_m\}$ has an exceptional set $T$ of primes of $E$.
For all $m$, the Galois action $\theta_m$ is unramified outside $T$, see Lemma \ref{Galois action unramification}.
Let $E_{T}$ be the maximal extension of $E$ unramified outside $T$.
The Galois action $\theta_m$ induces an action $\Gal(E_T/E)\to \Aut(G_m)$.

We denote by $H^1(\Gal(E_T/E),G_m)$ the first Galois cohomology for the action $\theta_m$.
For any $\tau=[\rho]$ in $H^1(\Gal(E_T/E), G_m)$,
let $\mc{X}_m^{\tau}$ be the corresponding \emph{twist} given by \cite[Theorem 4.5.2]{Poonen17}.
There exists an isomorphism $\lambda: (\mc{X}_m)_{E_T}\to (\mc{X}_m^{\tau})_{E_T}$ such that 
$\lambda^{\sigma}:=\sigma_{\mc{X}_m^{\tau}}\circ\lambda\circ\sigma_{\mc{X}_m}^{-1}=\lambda\circ \rho(\sigma)$ 
for any $\sigma\in \Gal(E_T/E)$, see \cite[Remark 4.5.4]{Poonen17}.
Let $f_m^{\tau}:\mc{X}_m^{\tau}\ra \mc{X}$ be the descended finite morphism characterized by $(f_m^{\tau})_{E_T}\circ \lambda = (f_m)_{E_T}$.

For any line bundle $L$ on a variety $\mc{X}$, let $H_L$ be the exponential Weil height with respect to $L$,
i.e. $H_L(P)=\exp(h_L(P))$, where $h_L$ is the logarithmic Weil height with respect to $L$.

\begin{definition}\label{height comparison on twists}
Let $\mf{T}:=\{\mc{X}_m\}$ be a tower with $\Gamma_{p^m}$-structures such that there exists an exceptional set $T$ of primes. The tower $\mf{T}$ satisfies \emph{height comparison on twists} if there exist a line bundle $L$ on $\mc{X}$ and constants $C_1,C_2,C_3>0$ depending only on $\mf{T}$ and $T$ such that for any positive integer $m$ and any $\tau\in H^1(\Gal(E_T/E),G_m)$, there exists an $\mc{O}_{E}$-model $\ms{X}_m^{\tau}$ of the twist $\mc{X}_m^{\tau}$ with a projective embedding $\iota_m^{\tau}:\ms{X}_m^{\tau}\hookrightarrow \mathbb{P}^{M(m,\tau)}_{\mc{O}_{E}}$ such that the following conditions hold: 
\begin{itemize}
\item $M(m,\tau) \leq \poly_{\mc{X},\mf{T},T}(\deg f_m)$,
\item  $f_m^{\tau*}L\simeq \iota_m^{\tau*}\mc{O}(1)_{E}$,
\item  for any $B>e$ and any $P\in \mc{X}_m^{\tau}(E)$ with $H_{L}(f_m^{\tau}(P))\leq B$, we have 
 $$h_{\iota_m^{\tau*}\ol{\mc{O}(1)}}(P)< C_1+C_2\log B+ C_3(\deg f_m)^{1/n},$$ 
where $h_{\iota_m^{\tau*}\ol{\mc{O}(1)}}$ is the logarithmic Arakelov height with respect to the Hermitian line bundle $\iota_m^{\tau*}\ol{\mc{O}(1)}$ equipped with the Fubini-Study metrics.
\end{itemize}
\end{definition}

In Section \ref{Proof of main theorem}, we will prove the following main result:

\begin{thm}\label{main theorem} 
Let $\mc{X}$ be a smooth irreducible proper variety over a number field $E$ of dimension $n$. 
Suppose $\mc{X}$ has a tower $\mf{T}:=\{\mc{X}_m\}$ with $\Gamma_{p^m}$-structures. 
Suppose there exists an exceptional set $T$ of primes for $\mf{T}$. 
If $\mf{T}$ satisfies height comparison on twists, and if the cardinality of $H^1(\Gal(E_T/E),G_m)$ is at most $\poly_{\mc{X},\mf{T},T}(|\Gamma_{p^m}|)$, 
then $\mc{X}$ has a polylogarithmic-size covering of rational points. 
\end{thm}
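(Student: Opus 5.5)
The strategy follows Ellenberg--Lawrence--Venkatesh: lift rational points of $\mc{X}$ to rational points on twists of a single level $\mc{X}_m$, chosen with $|G_m|$ of size about $(\log B)^n$, and cover the points on each twist using a Broberg/Heath-Brown type determinant-method bound.

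\emph{Step 1 (lifting).} Fix $m$. Let $x\in\mc{X}(E)$. By the exceptional set $T$, the fibre $f_m^{-1}(x)$ extends to a finite \'etale scheme over $\mc{O}_{E,T}$, so it is a $G_m$-torsor over $E$ that is unramified outside $T$. Hence its class lies in $H^1(\Gal(E_T/E),G_m)$ (this uses Lemma \ref{Galois action unramification}). The standard twisting argument then shows that $x=f_m^{\tau}(P)$ for some $\tau$ in this cohomology set and some $P\in\mc{X}_m^{\tau}(E)$. Consequently
$$\mc{X}(E)=\bigcup_{\tau\in H^1(\Gal(E_T/E),G_m)} f_m^{\tau}\bigl(\mc{X}_m^{\tau}(E)\bigr).$$
By hypothesis the number of twists is at most $\poly(|\Gamma_{p^m}|)$.

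\emph{Step 2 (counting on one twist).} Take $x$ with $H_L(x)\le B$. We may take $L$ to be ample, or compare $L$ with an ample bundle. By height comparison on twists, $P$ has Arakelov height less than $C_1+C_2\log B+C_3 d^{1/n}$ in the embedding $\iota_m^\tau$, where $d=\deg f_m=|G_m|$. The variety $\mc{X}_m^{\tau}\subset\mb{P}^{M}$ has dimension $n$ and degree $D=\deg_L(\mc{X})\cdot d$, because $f_m^{\tau*}L=\mc{O}(1)$. Apply a uniform Broberg-type theorem over $\mc{O}_E$ with explicit dependence on the degree and the ambient dimension. It covers the points of height $\le H$ on an $n$-dimensional subvariety of degree $D$ by
$$\ll_{E,n}\; \poly(M)\,\exp\bigl(O(h/D^{1/n})\bigr)\,D^{O(1)}$$
hypersurfaces of degree $\poly(D)$ not containing $\mc{X}_m^{\tau}$, where $h=\log H$. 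Here $h/D^{1/n}\approx (\log B + d^{1/n})/d^{1/n}$.

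\emph{Step 3 (choice of level).} The $\Gamma_{p^m}$-structure makes $|\Gamma_{p^m}|$ grow at most by a factor $p^{k^2}$ from one level to the next, and $\Gamma$ is infinite, so $|\Gamma_{p^m}|\to\infty$. We can therefore pick $m$ with $d\asymp(\log B)^n$, up to a constant factor. Then the exponential factor is $O(1)$. The number of twists, the ambient dimension $M$, and the number and degrees of the hypersurfaces are all $\poly(\log B)$. Pushing each hypersurface section forward by $f_m^{\tau}$ gives a closed subset of $\mc{X}$ of dimension $<n$. Its irreducible components have $L$-degree bounded by $\poly(D)$ via B\'ezout and the projection formula, and their number is likewise polynomial. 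Multiplying the counts gives $\poly(\log B)$ subvarieties of degree $\poly(\log B)$. For small $B$, below the first feasible level, the points are finitely many and are absorbed into the constants.

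\emph{Main obstacle.} The hard part is Step 2. It requires a version of Broberg's theorem that is uniform in the ambient projective dimension $M$ and in the degree $D$, with the exponent of the form $h/D^{1/n}$ and all losses at most polynomial in $D$ and $M$. Broberg's statement fixes the ambient space, so this dependence has to be tracked through the determinant method (Bombieri--Pila, Heath-Brown, and Salberger-style explicit versions), using the Arakelov height of the integral model $\ms{X}_m^\tau$. I would first try to extract it from the ELV/Brunebarbe--Maculan treatment by projecting generically to $\mb{P}^{n+1}$. That projection has to be done with a height loss of only $O(\log M+\log D)$ per point.
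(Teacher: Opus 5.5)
\textbf{There is a genuine gap, and it sits in Step 2.}

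Your Steps 1 and 3 match the paper's argument. You lift each point of $\mc{X}(E)$ to some $\mc{X}_m^{\tau}(E)$ with $\tau\in H^1(\Gal(E_T/E),G_m)$. You take the least $m$ with $|\Gamma_{p^m}|\ge(\log B)^n$, so that $|\Gamma_{p^m}|<p^{k^2}(\log B)^n$, and use $d_m=|\Gamma_{p^m}|\deg L$. You then push the hypersurface sections forward using the projection formula. All of this is what the paper does.

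Step 2 is where the substance of the theorem lies, and you flag it as the main obstacle without supplying it. What is needed is the following. For $X\subset\mb{P}^M_E$ irreducible of dimension $n$ and degree $d$, the points of Fubini--Study height at most $H$ should lie on at most $\poly_{n,E}(M,d)\cdot\bigl((M+1)^{[E:\QQ]}H\bigr)^{c(n)d^{-1/n}}$ hypersurfaces of degree $\poly(M,d)$ not containing $X$, with no extra factor $H^{\varepsilon}$.

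Neither off-the-shelf version gives this.
\begin{itemize}
\item Broberg's theorem carries a factor $H^{\varepsilon}$. This is fatal, because $H$ here is $B^{O(1)}$. Its constant also depends on $M$ and $d$ in an unspecified way.
\item Chen's Arakelov version has the right exponent $(1+\varepsilon)(n+1)d^{-1/n}$, but its constant is exponential in $M$. The hypothesis only bounds $M(m,\tau)$ by $\poly(\deg f_m)$, and $M$ can really be of size $\deg f_m\asymp(\log B)^n$, as happens for abelian varieties. That costs a factor $\exp(c(\log B)^n)$.
\end{itemize}

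\textbf{How the paper fills it.} The paper proves exactly this estimate as Theorem \ref{Arakelov Bombieri-Pila}, by reworking Chen's proof rather than by projecting.
\begin{itemize}
\item If $\log H/[E:\QQ]<\frac{n!}{d(2n+2)^{n+1}}h_{\ol{\ms{L}}}(X)-\frac32\log(M+1)-2^n$, Chen's result already gives a single hypersurface.
\item Otherwise $h_{\ol{\ms{L}}}(X)\le\poly(d)(\log H+\log(M+1)+1)$. Combined with Bost's bound $\widehat{\mu}_{\max}(S^d\ol{\ms{E}}^\vee)\le d(\widehat{\mu}_{\max}(\ol{\ms{E}})+2\rk\ms{E}\log\rk\ms{E})$ for Chen's constant, this shows the number $r$ of auxiliary primes is $\poly(M,d,\varepsilon^{-1})$.
\item Dudek's theorem on primes between consecutive cubes then supplies $r$ primes $p_i\in[N_0,(N_0^{1/3}+i)^3]$, where $\log N_0=(1+\varepsilon)d^{-1/n}\frac{n+1}{nv}(\log H+[E:\QQ]\log(M+1))$.
\item Each $p_i$ contributes at most $d(n+1)p_i^{vn}$ hypersurfaces: one per residue class modulo $\mf{p}_i^{v/f_i}$, by Chen's local estimate and the Ghorpade--Lachaud point count.
\item The total is therefore $\poly(r,d)N_0^{vn}=\poly(M,d,\varepsilon^{-1})\bigl((M+1)^{[E:\QQ]}H\bigr)^{(1+\varepsilon)(n+1)d^{-1/n}}$, which is the required shape.
\end{itemize}

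\textbf{Your projection route.} Projecting to $\mb{P}^{n+1}$ is a genuinely different possible route, but as stated it still needs two things you do not provide:
\begin{itemize}
\item a projection defined over $E$ with coefficients of height $\poly(M,d)$ that is finite on $\mc{X}_m^{\tau}$ and birational onto its image, since otherwise the image degree drops and with it the saving $d^{-1/n}$;
\item an $\varepsilon$-free hypersurface bound over $E$ that is polynomial in the degree.
\end{itemize}
So Step 2 is currently an assumption rather than a proof.
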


\begin{remark}\label{remark for Shimura variety}
In an informal discussion, Emmanuel Ullmo suggested the use of fibered powers of a Hecke correspondence to prove height comparison for towers of Shimura varieties. 
This will be a later work.
The use of fibered powers of a Hecke correspondence will be analogous to the use of Koizumi's theorem in the case for abelian varieties in Section \ref{Proof of Neron's theorem}.
The existence of an exceptional set of primes will require \cite[Theorem 6.2]{BST24}.
The assumption about the cardinality of $H^1(\Gal(E_T/E),G_m)$ can possibly be verified in certain cases using the theory of pro-$p$ groups. 
\end{remark}

\begin{remark}\label{Remark on hyperbolicity}
It is expected that Shimura varieties have at most polylogarithmically many integral points. 
This will need new inputs from hyperbolicity. 
In recent works \cite{Mem22, Mem25, Mem26} by Memariansorkhabi, volume estimates are used to prove subpolynomial counting bounds for \emph{rational points} on \emph{non-proper} ball quotients and Hilbert modular varieties.
\end{remark}

In Section \ref{Proof of Neron's theorem}, we will prove the following:

\begin{thm}\label{abelian variety, height comparison}
Let $\mc{A}$ be an abelian variety over a number field $E$. Then $\mc{A}$ admits a tower $\mf{T}$ with $\Gamma_{p^m}$-structures and exceptional set $T$ of primes such that height comparison on twists holds and the cardinality of $H^1(\Gal(E_T/E),G_m)$ is at most $\poly_{\mc{A},\mf{T},T}(|\Gamma_{p^m}|)$.
\end{thm}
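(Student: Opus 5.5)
The tower is the multiplication-by-$p^m$ tower. Fix a prime $p$ and set $\mc{X}_m:=\mc{A}$ with $f_m:=[p^m]:\mc{A}\to\mc{A}$. This map is finite étale and geometrically Galois, with deck group $G_m=\mc{A}[p^m](\ol{E})\cong(\ZZ/p^m\ZZ)^{2g}$ acting by translations, where $g=\dim\mc{A}$. Take $\Gamma:=\{\left(\begin{smallmatrix}1&v\\0&1\end{smallmatrix}\right): v\in\ZZ^{2g}\}\subset\bGL_{2g+1}(\ZZ)$. This is an infinite group isomorphic to $\ZZ^{2g}$, and its image mod $p^m$ is $(\ZZ/p^m\ZZ)^{2g}$, so $G_m\cong\Gamma_{p^m}$. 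The Galois action $\theta_m$ is the natural Galois action on $\mc{A}[p^m]$, since translations conjugated by $\sigma$ are translations by $\sigma$ of the point. For $T$ I will take the primes above $p$ together with the primes of bad reduction of $\mc{A}$. Over $\mc{O}_{E,T}$ the Néron model $\ms{A}$ is an abelian scheme, and $[p^m]$ on it is finite étale because $p$ is invertible. So $T$ is exceptional.

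For the cohomology bound, $H^1(\Gal(E_T/E),\mc{A}[p^m])$ is a subquotient, via the Kummer-type filtration by $\mc{A}[p^j]$, of copies of $H^1(\Gal(E_T/E),\mc{A}[p])$. Each copy is finite by Hermite--Minkowski: the kernel of restriction to the trivializing field $E(\mc{A}[p])$ is finite, and the image lands in homomorphisms from the Galois group of $E(\mc{A}[p])_T$ to $\mc{A}[p]$, which factor through a finite elementary abelian extension unramified outside $T$. Inducting on $m$ via $0\to\mc{A}[p^{m-1}]\to\mc{A}[p^m]\to\mc{A}[p]\to0$ gives $|H^1(\Gal(E_T/E),\mc{A}[p^m])|\le c^m$ with $c=|H^1(\Gal(E_T/E),\mc{A}[p])|$. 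Since $|\Gamma_{p^m}|=p^{2gm}$, this is $c^m\le |\Gamma_{p^m}|^{\log c/(2g\log p)}$, a polynomial bound.

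The twists come next. A cocycle $\rho$ with values in $\mc{A}[p^m]$ gives a twist $\mc{A}^\tau$, which is a torsor under $\mc{A}$. The twisted map $f_m^\tau:\mc{A}^\tau\to\mc{A}$ becomes $[p^m]$ after base change. An $E$-point $P$ of $\mc{A}^\tau$ corresponds to a point $Q\in\mc{A}(\ol{E})$, namely $\lambda^{-1}(P)$, with $p^mQ=f_m^\tau(P)$. Fix a symmetric ample $L$ and a very ample power $L^{\otimes 3}$. Note $[p^m]^*L\simeq L^{\otimes p^{2m}}$ and $\deg f_m=p^{2gm}$, so $p^{2m}=(\deg f_m)^{1/g}$, which matches the exponent $1/n$ in Definition \ref{height comparison on twists} with $n=g$. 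The Néron--Tate height then gives
\[
\hat h_L(Q)=p^{-2m}\hat h_L(p^mQ)\le p^{-2m}\bigl(\log B+c_0\bigr),
\]
which is tiny.

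What remains is to build a model $\ms{X}_m^\tau$ with an embedding $\iota_m^\tau$ such that $f_m^{\tau*}L\simeq\iota_m^{\tau*}\mc{O}(1)_E$ and $M(m,\tau)\le\poly(\deg f_m)$, and to control the Arakelov height $h_{\iota^*\ol{\mc{O}(1)}}(P)$ by $\hat h$ plus an error of size $O((\deg f_m)^{1/g})=O(p^{2m})$. This is the main obstacle. My first attempt is as follows. Over $\ol{E}$, $[p^m]^*L\cong L^{p^{2m}}$ has $h^0=p^{2gm}\deg_L$ sections, so the space of sections has dimension polynomial in $\deg f_m$. It descends to $\mc{A}^\tau$ because $f_m^{\tau*}L$ is defined over $E$. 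Take a basis of the descended sections and let $\iota_m^\tau$ be the resulting embedding; Mumford theta structures suggest it is projectively normal, with $\mc{O}_E$-model given by the closure. The Arakelov height with respect to any basis differs from the Weil height $h_{f^{\tau*}L}(P)=h_L(f_m^\tau(P))+O(1)$ by an amount controlled by the sizes of the basis sections. The paper points to Koizumi's theorem, which I would use to choose a basis whose coefficients relate well to those of $L$: theta relations of the form ``the pullback of $L$ under multiplication by an integer matrix'', via the fibered power $\mc{A}^r$ and an isogeny $\alpha$ with $\alpha^*L^{\boxtimes r}\simeq L^{\boxtimes r\,p^{2m}}$-type identities of bounded complexity. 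In this way the error accumulates additively, one bounded constant per application. The target is an error of $C_3p^{2m}$ in $h$, equivalently a bound of the form $h_{\iota^*}(P)\le h_L(f(P))+C_3p^{2m}+C_1$. The bound $\log B$ dominates the height of $f(P)$, which gives the required inequality with $C_2=1$.
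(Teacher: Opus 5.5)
Your tower ($f_m=[p^m]$ with the unipotent $\Gamma\cong\ZZ^{2g}$), your exceptional set $T$, and your bound $|H^1(\Gal(E_T/E),G_m)|\le c^m$ by induction on $0\to\mc{A}[p^{m-1}]\to\mc{A}[p^m]\to\mc{A}[p]\to 0$ are exactly the paper's. The gap is the height comparison on twists. You flag it as the main obstacle but do not resolve it, and your sketch never confronts the twist.

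Taking ``a basis of the descended sections'' of $H^0(\mc{A}^{\tau},f_m^{\tau*}L)$ gives no control that is uniform in $\tau$. The sections you can estimate are those transported from $\mc{A}$ by $\lambda$, and these are rational only over the splitting field $E_{m,\rho}$. That field's degree is only bounded by $|G_m|^{2g+1}$, and its discriminant grows with $m$. The definition, however, requires an embedding of an $\mc{O}_E$-model into $\mb{P}^{M}_{\mc{O}_E}$.

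Likewise, the step $h_{f^{\tau*}L}(P)=h_L(f_m^{\tau}(P))+O(1)$ hides a constant depending on $f_m^{\tau}$, i.e.\ on $(m,\tau)$. Showing that this discrepancy is $O((\deg f_m)^{1/g})$ uniformly is the entire content of the claim, so the argument as written is circular. The isogenies on fibered powers $\mc{A}^r$ ``with one bounded constant per application'' are not a concrete mechanism, and they say nothing about $E$-rationality on $\mc{A}^{\tau}$.

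The paper supplies three ingredients you are missing.
\begin{itemize}
\item \emph{Canonical metrics.} Zhang's canonical metrics make $[p^m]^*$ and translations by $p^m$-torsion points isometries, and they are compatible with Galois conjugation. Hence $\widehat{h}_{\mc{M}^{\tau}}(P)=\widehat{h}_L(f_m^{\tau}(P))$ exactly. The de Jong--Shokrieh formula then gives an exact identity: $h_{\iota^{\tau*}\ol{\mc{O}(1)}}(P)$ equals $\widehat{h}_{\mc{M}^{\tau}}(P)$ plus averaged local terms $\log\max_i\|t_i(P)\|_v$ (resp.\ $\log(\sum_i\|t_i(P)\|_v^2)^{1/2}$ at archimedean $v$). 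Everything therefore reduces to sup-norm bounds for an $E$-basis $\{t_i\}$.
\item \emph{Koizumi.} Koizumi's theorem is used only as surjectivity of $H^0(\mc{K}^{\otimes 3})^{\otimes p^{2m}}\to H^0(\mc{K}^{\otimes 3p^{2m}})$. This gives a monomial basis $u_i$ with $\|u_i\|_{\sup,w}\le C_w^{p^{2m}}$ and $C_w=1$ at good places. This is the source of the error $p^{2m}c_{\mf{B}}=O((\deg f_m)^{1/g})$.
\item \emph{Descent by twisted trace.} The $E$-basis is $t_j=(\lambda^*)^{-1}\text{Tr}_{\tau}(\alpha_{a_j}u_{i_j})$. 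Here $\text{Tr}_{\tau}=\sum_{\sigma}\delta_{\sigma}$ with $\delta_{\sigma}=(\sigma_{\mc{A}}^{-1})^*\ol{\rho}(\sigma^{-1})^*$, and $\{\alpha_a\}$ is an integral $\QQ$-basis of $E_{m,\rho}$ with $|\alpha_a|_v\le|D_{E_{m,\rho}}|^{1/[E_{m,\rho}:\QQ]}$ (the $L^\infty$ bound of de Boer et al.\ on the last successive minimum). The cost of this descent is $\log[E_{m,\rho}:E]+\log|D_{E_{m,\rho}}|/[E_{m,\rho}:\QQ]=O(\log p^m)$. The discriminant term is controlled because $E_{m,\rho}/E$ is unramified outside $T$.
\end{itemize}
Without some such uniformly small $E$-rational basis on every twist, your proof does not go through. (Your estimate $\widehat{h}_L(Q)\le p^{-2m}(\log B+c_0)$ is beside the point: the embedding is by $f_m^{\tau*}L$, and $\widehat{h}_{[p^m]^*L}(Q)=\widehat{h}_L(p^mQ)$.)
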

 
The proof of Theorem \ref{abelian variety, height comparison} uses Koizumi's theorem \cite{Koi76}, twisted trace, geometry of numbers, canonical metrics introduced by Zhang \cite{Zha95}, and results in de Jong and Shokrieh \cite{dJS22}.

Theorem \ref{main theorem} and Theorem \ref{abelian variety, height comparison} together imply that any  abelian variety over a number field has a polylogarithmic-size covering of rational points. 
In particular, any elliptic curve over a number field has at most polylogarithmically many rational points.
N\'{e}ron's theorem \cite{Ner65} states that the number of rational points of height at most $B$ on an abelian variety $\mc{A}$ is $\ll_{\mc{A}} (\log B)^{r/2}$, where $r$ is the rank of $\mc{A}$. 
A proof of N\'{e}ron's theorem using Theorem \ref{main theorem} and Theorem \ref{abelian variety, height comparison} will need new inputs.

\subsection{Intuition behind Theorem \ref{main theorem} and its proof}\label{intuition}
Every rational points can be lifted to one of the twists of the finite \'{e}tale cover \cite[p. 22]{Sko01}, so it suffices to bound the number of twists, estimate the growth of rational points on the twist, and compare the height of a rational point on the twist and the height of its image in $\mc{X}$.
The conditional bound for the size of $H^1(\Gal(E_T/E),G_m)$ provides a bound for the number of twists. 
Let $n$ be the dimension of $\mc{X}$.
Let $d$ be the degree of the twist of finite \'{e}tale cover with respect to some embedding into $\mb{P}^M_E$.
If one tries to apply Broberg's theorem \cite{Bro04},  the number of hypersurfaces of bounded degree needed to contain the rational points with height at most $B$ on the twist is at most $O_{M,d,\varepsilon}(B^{(n+1)d^{-1/n}+\varepsilon})$ for any $\varepsilon >0$. 
By taking $d\sim (\log B)^n$, this estimate is polynomial in $(\log B)\cdot B^{\varepsilon}$ if the implicit constant is polynomial in $d$.
To compute the implicit constant and get rid of the factor $B^{\varepsilon}$, we use an Arakelov version of Broberg's theorem instead. 
Such a version was proved by Huayi Chen \cite{Chen12a, Chen12b}. The bound in this version is polynomial in $B^{(n+1)d^{-1/n}{(1+\varepsilon)}}$, which becomes finite when we fix $\varepsilon$ and take $d\sim (\log B)^n$.
However, the constant in the bound was computed to be exponential in $M$.
We use an idea in a previous paper \cite{Chiu24} by the author to replace the constant by a polynomial in $M$ (and thus in $d$) using Dudek's theorem on primes between cubes \cite{Dud16}.
Alternatively, it may be possible to apply \cite[Theorem 1.1]{Chiu24} directly, but one has to compute the implicit constants in the bound for the size of the complex cellular cover, see (1) in \emph{op. cit.} and \cite{BN19}.

\subsection{Acknowledgement}
The author would like to thank Gal Binyamini, Cong Ding, Ziyang Gao, Soheil Memarian, Ngaiming Mok, Carlo Pagano, Ananth Shankar, Jacob Tsimerman, Emmanuel Ullmo, Kwok Kin Wong, Jiacheng Xia, Sai-Kee Yeung, Asif Zaman and Shou-Wu Zhang for discussions and comments.
AI was not used in any stage of writing from Section \ref{Introduction} to Section \ref{Proof of main theorem}. 
The author discussed Section \ref{Proof of Neron's theorem} with Gemini 3.1 Pro and GPT-5.6 Sol Pro in the early stage of proof structuring.
The final proofs were written by the author. 
The author takes full responsibility for the content of the paper.
The author was partially supported by the New Cornerstone Science Foundation through the New Cornerstone Investigator Program awarded to Professor Xuhua He.
The author was partially supported by the Glorious Sun Postdoctoral Fellowship.

\section{Arakelov geometry and height functions}
We first recall a few notions in \cite{BG06}, \cite{BGS94}, and \cite{Chen12b}. 
Let $K$ be a finite extension of $\QQ$.
An arithmetic variety over the ring of integers $\mathcal{O}_K$ is a flat integral scheme of finite type over $\Spec \mathcal{O}_K$.
Let $\ms{X}$ be a regular projective arithmetic variety over $\mc{O}_K$.
Let $X:=\ms{X}_K$.
Let $\overline{\mathscr{L}}$ be a Hermitian line bundle on $\ms{X}$.
We denote by $\overline{\mathscr{L}}^\vee$ the dual of $\overline{\mathscr{L}}$.
Let $Z$ be an integral closed subscheme of $X$ of dimension $p$. 
Let $\ms{Z}$ be the Zariski closure of $Z$ in $\ms{X}$.

\begin{definition}[{\cite[\S 3.1.4]{BGS94}}]
The \emph{(normalized) logarithmic Arakelov height} of $Z$ with respect to $\overline{\mathscr{L}}$ is defined to be 
$$h_{\overline{\mathscr{L}}}(Z):=\frac{1}{[K:\QQ]}\widehat{\deg}(\widehat{c_1} (\overline{\mathscr{L}})^{p+1} \cdot [\mathscr{Z}]),$$
where $\widehat{c_1} (\overline{\mathscr{L}})$ is the arithmetic first Chern class of $\overline{\mathscr{L}}$ defined in \cite[\S 2.1.2]{BGS94},
and $\widehat{\deg}$ is the arithmetic degree defined in \cite[\S 2.3.4]{BGS94}.
\end{definition}

\begin{definition}
The \emph{Arakelov degree} of a Hermitian vector bundle $\overline{E}$ on $\Spec \mathcal{O}_K$ is defined as
$$\widehat{\deg}(\overline{E}):=\log \# (E/(\mathcal{O}_Ks_1+\cdots +\mathcal{O}_K s_r))-\frac{1}{2}\sum_{\sigma: K\hookrightarrow \CC}\log \det (\langle s_i,s_j\rangle_{\sigma}),$$
where $(s_1,\dots, s_r)\in E^r$ forms a basis of $E_K$ over $K$ and $r$ is the rank of $E$.
\end{definition}

\begin{definition}
Let $\overline{E}$ be a non-zero Hermitian vector bundle on $\Spec \mathcal{O}_K$. The \emph{slope} of $\overline{E}$ is 
$$\widehat{\mu} (\overline{E}):=\frac{1}{[K:\QQ]} \frac{\widehat{\deg}(\overline{E})}{\rk(E)}.$$
Denote by $\widehat{\mu}_{\max}(\overline{E})$ the maximum of the slopes of all non-zero Hermitian subbundles of $\overline{E}$.
\end{definition}

\begin{definition}
Let $P\in X(K)$. The \emph{(normalized) logarithmic Arakelov height} of $P$ with respect to $\overline{\mathscr{L}}$ is defined to be 
$$h_{\overline{\mathscr{L}}}(P):=\frac{1}{[K:\QQ]}\widehat{\deg}(\widehat{c_1} (\overline{\mathscr{L}})\cdot [\mathscr{P}]),$$
where $\mathscr{P}$ is the image of the section $\varepsilon_P: \Spec \mc{O}_K\ra \ms{X}$ attached to $P$.
By \cite[3.1.2.1]{BGS94}, it is equal to the slope of $\varepsilon_P^*\ol{\ms{L}}$.
By \cite[\S 3.1.4]{BGS94}, this height is invariant under base change, so this definition extends to points in $X(\ol{K})$.
\end{definition}

\begin{definition}\label{height of a point}
Let $P\in X(K)$. The \emph{exponential Arakelov height} $H_{\ol{\ms{L}}}(P)$ of $P$ with respect to $\overline{\mathscr{L}}$ is defined to be $\exp ([K:\QQ] h_{\ol{\ms{L}}}(P))$.
\end{definition}

Let $\ol{\mc{O}(1)}$ be the Hermitian line bundle on the projective space $\mb{P}^M_{\mc{O}_K}$ equipped with the Fubini-Study metrics. 
Write $n_v:=[K_v: \QQ_{v_0}]$, where $v_0$ is the place below a place $v$.
The logarithmic Arakelov height with respect to $\ol{\mc{O}(1)}$ of a point $[x_0:\cdots: x_{M}]$ in $\mb{P}^{M}(K)$ is given by
$$h_{\ol{\mc{O}(1)}}([x_0,\dots, x_M])=\frac{1}{[K:\QQ]}\left(\sum_{v\nmid \infty } n_v\log \max_{0\leq j\leq M}|x_j|_v+\sum_{v\mid\infty} n_v\log \left(\sum_{j=0}^M|x_j|_v^2\right)^{1/2}\right).$$
Let $M_K$ be the set of all places of $K$.
The logarithmic Weil height of $[x_0,\dots, x_M]$ is given by 
$$h([x_0,\dots, x_{M}])=\frac{1}{[K:\QQ]}\sum_{v\in M_K}n_v\log \max_{0\leq j\leq M}|x_j|_v.$$

\section{Hypersurface coverings of rational points}
In this section, we replace a constant which appeared in \cite{Chen12b} by a polynomial in $M$ as mentioned in Section \ref{intuition}.
Let $K$ be a finite extension of $\QQ$.
Let $M$ be a positive integer.
Let $\iota: X\hookrightarrow \mathbb{P}_K^M$ be a closed immersion, where $X$ is an irreducible subvariety of dimension $n$ and degree $d$.
Let $\mathscr{X}$ be the Zariski closure of $X$ in $\mathbb{P}^M_{\mathcal{O}_K}$.
Let $\overline{\mathscr{L}}:= \overline{\mathcal{O}_{\mathscr{X}}(1)}$ be the pullback of the Hermitian line bundle $\overline{\mathcal{O}(1)}$ to $\mathscr{X}$ equipped with the pullbacks of the Fubini-Study metrics at the Archimedean places.
Let $S(X,B)$ be the set of $K$-points of $X$ with exponential Arakelov height $H_{\overline{\mathscr{L}}}(P) \leq B$.
For any maximal ideal $\mathfrak{p}$ of $\mathcal{O}_K$, denote by $\mathbb{F}_{\mathfrak{p}}$ its residue field, and by $N_{\mathfrak{p}}$ the cardinality of $\mathbb{F}_{\mathfrak{p}}$.  

Since $\mathscr{X}$ is a proper $\mathcal{O}_K$-scheme, any point in $X(K)$ extends to an $\mathcal{O}_K$-point.
For any maximal ideal $\mathfrak{p}$ of $\mathcal{O}_K$ and any $a\in \ZZ_{>0}$, denote by $A^{(a)}_{\mathfrak{p}}$ the Artinian local ring $\mathcal{O}_{K,\mathfrak{p}}/\mathfrak{p}^a\mathcal{O}_{K,\mathfrak{p}}$.
For any $\eta\in \mathscr{X}(A^{(a)}_{\mathfrak{p}})$, denote by $S(X,B,\eta)$ the set of points in $S(X,B)$ whose reduction modulo $\mathfrak{p}^a$ coincides with $\eta$.
For any maximal ideal $\mathfrak{p}$ of $\mathcal{O}_K$, define 
$$S_1(X,B, \mathfrak{p}):= \bigcup_{\text{regular }\xi\in \mathscr{X}(\mathbb{F}_{\mathfrak{p}})} S(X, B, \xi).$$

Let $v$ be the least common multiple of $1,2,\dots, [K:\QQ]$.
Let $S_1(X,B)$ be the set of regular $K$-points $P$ of $X$ with exponential Arakelov height $H_{\overline{\mathscr{L}}}(P) \leq B$.

\begin{thm}\label{Arakelov Bombieri-Pila}
Let $\varepsilon >0$. 
Let $D$ be an integer such that
$$D>\max\left\{(\varepsilon^{-1}+1)(d^{-1/n}(n+3)/2 +d-2), 2(M-n)(d-1)+n+2\right\}.$$
Let
\begin{equation}\label{choosing B}
B_0:=\max\left\{\exp(3\exp(33.3)d^{1/n}\frac{nv}{n+1}(1+\varepsilon)^{-1}), \exp(\varepsilon)\right\}.
\end{equation}
For any $B> B_0$, the set $S_1(X,B)$ is covered by $N$ hypersurfaces defined over $K$ of degree $D$, none of which contain $X$,
where  
$$N\leq\poly_{n,K}(M,d, \varepsilon^{-1})\cdot (M+1)^{[K:\QQ](1+\varepsilon)(n+1)d^{-1/n}}B^{(1+\varepsilon)(n+1)d^{-1/n}}.$$
\end{thm}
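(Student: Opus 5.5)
We follow Chen's Arakelov version of the determinant method \cite{Chen12a, Chen12b}. The one change is to choose the auxiliary prime $\mathfrak{p}$ by Dudek's theorem \cite{Dud16}, so that $N_{\mathfrak{p}}$ is controlled polynomially rather than exponentially in $M$.

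First, fix a maximal ideal $\mathfrak{p}$ of $\mathcal{O}_K$ and a regular point $\xi\in\mathscr{X}(\mathbb{F}_{\mathfrak{p}})$. We bound the number of degree-$D$ hypersurfaces, none containing $X$, needed to cover $S(X,B,\xi)$. Take a family of points $P_1,\dots,P_r$ in $S(X,B,\xi)$ and form the evaluation map
\[
H^0(\mathbb{P}^M_{\mathcal{O}_K},\mathcal{O}(D))\longrightarrow \bigoplus_i P_i^*\ol{\mathscr{L}}^{\otimes D},
\]
restricted to the image $E_D$ of the global sections on $\mathscr{X}$. Chen's slope inequality bounds $\widehat{\deg}(E_D)$ from above by
\[
\rk(E_D)\Bigl(\widehat{\mu}_{\max}\text{-terms}+D\log B+\log\rk E_D\Bigr)
\]
minus the $\mathfrak{p}$-adic vanishing contribution. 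Regularity of $\xi$ makes the $\mathfrak{p}$-adic valuations of the minors at least $\sim \frac{n!^{1/n}}{n+1}\rk(E_D)^{1+1/n}\log N_{\mathfrak{p}}$, via the Hilbert–Samuel function of the regular local ring at $\xi$. The lower bound for $\widehat{\deg}(E_D)$ comes from the arithmetic Hilbert–Samuel estimate, with $\widehat{\mu}_{\max}$ of $\mathrm{Sym}^D$ of the trivial Hermitian bundle of rank $M+1$ bounded by $\frac{D}{2}\log(M+1)$ or similar.

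Comparing the two bounds gives the conclusion: once
\[
\log N_{\mathfrak{p}}\ \geq\ (1+\varepsilon)(n+1)d^{-1/n}\Bigl(\log B+[K:\QQ]\log(M+1)\Bigr)+c,
\]
the determinant must vanish and a single hypersurface of degree $D$ suffices for $S(X,B,\xi)$. The lower bound on $D$ in the statement is exactly what makes the error terms $O(\rk E_D^{1+1/n-\delta})$ absorbable into the $\varepsilon$. The second term in the maximum, $2(M-n)(d-1)+n+2$, is the Castelnuovo/Chardin-type regularity threshold that guarantees $\rk E_D$ equals the Hilbert polynomial of $X$.

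Second, we cover $S_1(X,B)$ by $S_1(X,B,\mathfrak{p})$. For each regular $K$-point $P$, the closure $\mathscr{P}$ passes through a regular point of the fiber over $\mathfrak{p}$ as long as $\mathfrak{p}$ avoids a set of bad primes for $P$. To avoid per-point dependence, we use a small collection of primes, or argue as in \cite{Chen12b} that the product of bad primes is bounded by a height quantity, so some prime in a suitable norm interval is good for $P$. The total count is then
\[
N\le \sum_{\mathfrak{p}}\#\mathscr{X}(\mathbb{F}_{\mathfrak{p}})\le \sum_{\mathfrak{p}} \poly(d,M)\,N_{\mathfrak{p}}^{\,n},
\]
using Lang–Weil-type uniform bounds $\#\mathscr{X}(\mathbb{F}_{\mathfrak{p}})\ll d N_{\mathfrak{p}}^n$.

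Third, and this is the main obstacle, we choose the primes. We need rational primes $\ell$, inert or of controlled residue degree at most $[K:\QQ]$ (hence the $v=\mathrm{lcm}(1,\dots,[K:\QQ])$ appearing in $B_0$), with $N_{\mathfrak{p}}$ in a window $[Y,\ Y(1+o(1))]$ where $\log Y$ equals the threshold above. Dudek's theorem guarantees a prime between $x^3$ and $(x+1)^3$ for $x\ge \exp(\exp(33.3))$; this is why $B_0$ contains $\exp(3\exp(33.3)\cdots)$. Taking $x=\lfloor Y^{1/(3v)}\rfloor$ and raising to the residue degree gives $N_{\mathfrak{p}}\le Y\cdot(1+O(Y^{-1/3v}))^{3v}\le 2Y$. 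Then
\[
N_{\mathfrak{p}}^{n}\ \text{(normalized by the per-point exponent)}\ \ll\ (M+1)^{[K:\QQ](1+\varepsilon)(n+1)d^{-1/n}}\,B^{(1+\varepsilon)(n+1)d^{-1/n}}
\]
up to polynomial factors in $M$, $d$ and $\varepsilon^{-1}$. The delicate part is bookkeeping the constants so that only polynomial dependence on $M$ survives. In Chen's argument the exponential-in-$M$ loss came from a crude prime choice (Bertrand-type, losing a factor $2^{\text{exponent}}$) and from $\widehat{\mu}_{\max}$ bounds. I would first try to show that the only $M$-dependence in $\widehat{\mu}_{\max}$ enters through $\log(M+1)$ inside the threshold, producing exactly the factor $(M+1)^{[K:\QQ](\cdots)}$, while the rest is $\poly_{n,K}(M,d,\varepsilon^{-1})$.
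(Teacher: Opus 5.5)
\textbf{Gap 1: the number of primes is never bounded.} To cover $S_1(X,B)$ via Chen's Lemma~4.1 you need $r$ primes of norm at least $N_0$, where
\[
\log N_0=(1+\varepsilon)d^{-1/n}\tfrac{n+1}{nv}\bigl(\log B+[K:\QQ]\log(M+1)\bigr)
\]
and $r\approx\bigl((M-n)(d-1)\log B+[K:\QQ]((M-n)h_{\overline{\mathscr L}}(X)+C_3)\bigr)/\log N_0$. The height $h_{\overline{\mathscr L}}(X)$ of the variety is not controlled by $n,K,M,d,\varepsilon$, so your $N$ would depend on $X$.

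The paper removes this dependence with a dichotomy. If $\frac{\log B}{[K:\QQ]}<\frac{n!}{d(2n+2)^{n+1}}h_{\overline{\mathscr L}}(X)-\frac32\log(M+1)-2^n$, then the last step of Chen's proof of his Theorem~4.2 (under $D>2(M-n)(d-1)+n+2$) already puts all of $S_1(X,B)$ in a single degree-$D$ hypersurface not containing $X$. Otherwise
\[
h_{\overline{\mathscr L}}(X)\le\tfrac{d(2n+2)^{n+1}}{n!}\bigl(\tfrac{\log B}{[K:\QQ]}+\tfrac32\log(M+1)+2^n\bigr).
\]
Together with $\log B\ge\varepsilon$ and Bost's bound on $\widehat{\mu}_{\max}(S^d\overline{\mathscr E}^\vee)$ (which gives $C_3\le\poly(n,M,d)$), this yields $r\le\poly_{n,K}(M,d,\varepsilon^{-1})$. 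Only then does Dudek do its job: $r$ distinct primes in $[N_0,(N_0^{1/3}+r)^3]$ cost a factor $(1+r)^{3vn}$ over $N_0^{vn}$, which is polynomial precisely because $r$ is.

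\textbf{Gap 2: residue degrees and the exponent.} Dudek controls the rational prime $\ell$, not $N_{\mathfrak p}=\ell^f$, and you cannot choose $f\le[K:\QQ]$. ``Raising to the residue degree'' lands near $Y$ only if $f=v$. If you take $\ell\ge Y$ instead, $N_{\mathfrak p}^n$ can be as large as $Y^{[K:\QQ]n}$, which inflates the exponent.

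The missing device is to work modulo $\mathfrak p^a$ with $a=v/f$. Sort points by $\eta\in\mathscr X(A^{(a)}_{\mathfrak p})$ lifting a regular $\xi$, and apply Chen's Cor.~3.7 with $|J|=1$ to each $S(X,B,\eta)$. Each of the at most $d\sum_{k\le n}N_{\mathfrak p}^k$ points of $\mathscr X(\mathbb F_{\mathfrak p})$ has at most $N_{\mathfrak p}^{(a-1)n}$ lifts, so there are at most $d(n+1)N_{\mathfrak p}^{an}=d(n+1)\ell^{vn}$ classes. This is why $v$ appears, not a residue-degree constraint.

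Your threshold is also off by a factor of $n$. The valuation forced at a regular point is $\approx\frac{n}{n+1}(n!)^{1/n}(\rk E_D)^{1+1/n}$, not $\frac{(n!)^{1/n}}{n+1}(\rk E_D)^{1+1/n}$. So the correct condition is
\[
\log N_{\mathfrak p}^a\ge(1+\varepsilon)\tfrac{n+1}{n}d^{-1/n}\bigl(\log B+[K:\QQ]\log(M+1)\bigr),
\]
that is, $\ell\ge N_0$. Then $\ell^{vn}\le(N_0^{1/3}+r)^{3vn}$ gives exactly the stated factor $(M+1)^{[K:\QQ](1+\varepsilon)(n+1)d^{-1/n}}B^{(1+\varepsilon)(n+1)d^{-1/n}}$. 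Your threshold combined with $N_{\mathfrak p}^n$ classes would instead give exponent $n(n+1)d^{-1/n}$.
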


\begin{proof}
In the last paragraph of the proof of Theorem 4.2 in \cite{Chen12b}, Chen proved that if $D>2(M-n)(d-1)+n+2$ and if
$$\frac{\log B}{[K:\QQ]}<  \frac{n!}{d(2n+2)^{n+1}} h_{\overline{\mathscr{L}}} (X)-\frac{3}{2} \log (M+1)-2^n,$$
then 
$S_1(X,B)$ is contained in a hypersurface of degree $D$ in $\mathbb{P}^M_K$ which does not contain $X$.
It suffices to prove the case when
$$\frac{\log B}{[K:\QQ]}\geq \frac{n!}{d(2n+2)^{n+1}} h_{\overline{\mathscr{L}}} (X)-\frac{3}{2} \log (M+1)-2^n.$$
Let $N_0\in (1,\infty)$ such that
\begin{equation}\label{N_0}
\log N_0= (1+\varepsilon)d^{-1/n}\frac{n+1}{nv}\left(\log B+[K:\QQ] \log (M+1)\right).
\end{equation}
Let $r$ be the integral part of the number
$$\frac{(M-n)(d-1)\log B+((M-n)h_{\overline{\mathscr{L}}}(X)+C_3)[K:\QQ]}{\log N_0}+1,$$
where $C_3$ is the constant defined as follows in Notation 19 of Chen's paper \cite{Chen12b}:
let $\overline{\mathscr{E}}$ be the trivial Hermitian vector bundle of rank $M+1$ over $\Spec \mathcal{O}_K$, let $S^d(\overline{\mathscr{E}}^\vee)$ be the $d$-th symmetric power of $\overline{\mathscr{E}}^\vee$, let 
\begin{align*}
C_1:&=(n+2)\widehat{\mu}_{\max}(S^d(\overline{\mathscr{E}}^\vee))+\frac{1}{2}(n+2)\log \rk (S^d \mathscr{E})
\\&\quad+\frac{d}{2} \log ((n+2)(M-n))+\frac{d}{2}(n+1)\log (M+1),
\end{align*} 
$$C_2:= \frac{M-n}{2}\log \rk (S^d\mathscr{E})+\frac{1}{2}\log \rk (\Lambda^{M-n}\mathscr{E})+\log \sqrt{(M-n)!}+ (M-n)\log d,$$
$$C_3:= (M-n)C_1+C_2.$$
We have 
$$r\leq \frac{A_1\log B+A_2}{\log N_0}+1,$$
where $$A_1=(M-n)(d-1)+\frac{(2n+2)^{n+1}}{n!} (M-n)d$$
and 
$$A_2=[K:\QQ]\left(C_3+(M-n)\frac{(2n+2)^{n+1}}{n!} d\left(\frac{3}{2}\log (M+1)+2^n\right)\right).$$
Since $\log B\geq \varepsilon$, we have
$$r< A_3:=v\frac{A_1+\varepsilon^{-1}A_2}{(1+\varepsilon)d^{-1/n} (n+1)/n}+1.$$

By \eqref{choosing B} and \eqref{N_0}, we have $N_0^{1/3}>e^{e^{33.3}}$, so by Dudek's Theorem \cite{Dud16},  there exist $r$ distinct prime numbers $p_1,\dots, p_r$ such that 
$$N_0\leq p_i\leq (N_0^{1/3}+i)^{3}$$ for any $i=1,\dots, r$. 
Choose for each $i$ a maximal ideal $\mathfrak{p}_i$ of $\mathcal{O}_K$ lying over $p_i$.
By \cite[Lemma 4.1]{Chen12b}, 
$$S_1(X,B)=\bigcup_{i=1}^r S_1(X,B, \mathfrak{p}_i).$$
For any $i$, write $N_{\mathfrak{p}_i}=p_i^{f_i}$.
Let $a_i:=v/f_i$.
Let $\xi$ be a regular $\mathbb{F}_{\mathfrak{p}_i}$-point of $\mathscr{X}_{\mathbb{F}_{\mathfrak{p}_i}}$.
We have
\begin{align*}
\log N_{\mathfrak{p}_i}^{a_i}=\log p_i^{a_if_i}=v \log p_i
&\geq v\log N_0
\\&= (1+\varepsilon) d^{-1/n}\frac{n+1}{n}(\log B + [K:\QQ]\log (M+1)),
\end{align*}
so by \cite[Cor. 3.7]{Chen12b} (the case where $|J|=1$) and the assumption on $D$, for any $\eta\in \mathscr{X}(A_{\mathfrak{p}_i}^{(a_i)})$ whose reduction modulo $\mathfrak{p}_i$ is $\xi$,
there exists a hypersurface of degree $D$ of $\mathbb{P}^M_K$ not containing $X$ that contains $S(X,B, \eta)$. 
As in the proof of \cite[Theorem 4.2]{Chen12b}, every regular point in $\ms{X}(\mb{F}_{\mf{p}_i})$ has at most $N_{\mf{p}_i}^{(a_i-1)n}$ lifts to $\mathscr{X}(A_{\mathfrak{p}_i}^{(a_i)})$.
By \cite[Prop. 12.1]{GL02}, the cardinality of $\ms{X}(\mb{F}_{\mf{p}_i})$ is at most 
$\displaystyle d\sum_{k=0}^nN_{p_i}^k$.
Hence, the cardinality of $\mathscr{X}(A_{\mathfrak{p}_i}^{(a_i)})$ is at most 
$$d(n+1) N_{\mathfrak{p}_i}^{a_in}=d(n+1)p_i^{f_ia_in}=d(n+1)p_i^{vn}\leq d(n+1) (N_0^{1/3}+i)^{3vn}.$$
Hence, $S_1(X,B,\mathfrak{p_i})$ is covered by at most $d(n+1) (N_0^{1/3}+i)^{3vn}$ hypersurfaces of degree $D$ of $\mathbb{P}^M_K$ not containing $X$.
Therefore, $S_1(X,B)$ is covered by at most 
\begin{align*}
\sum_{i=1}^r d(n+1) (N_0^{1/3}+i)^{3vn}
&\leq rd(n+1)(N_0^{1/3}+r)^{3vn}
\\&\leq \poly_{n,K}(r,d) \cdot (M+1)^{[K:\QQ](1+\varepsilon)(n+1)d^{-1/n}}B^{(1+\varepsilon)(n+1)d^{-1/n}}
\end{align*}
hypersurfaces of degree $D$ of $\mathbb{P}^M_K$ not containing $X$.

It suffices to prove that $r\leq \poly_{n,K}(M,d, \varepsilon^{-1})$.
From \cite[(4.6)]{Bos01}, we know 
$$\widehat{\mu}_{\max}(S^d(\overline{\mathscr{E}}^\vee))\leq d(\widehat{\mu}_{\max}(\overline{\mathscr{E}})+2\rk (\mathscr{E})\log (\rk(\mathscr{E}))).$$ 
Since $\mathscr{E}$ is the trivial vector bundle, $\widehat{\mu}_{\max}(\overline{\mathscr{E}})=0$, 
so $\widehat{\mu}_{\max}(S^d(\overline{\mathscr{E}}^\vee))\leq \poly(M,d)$.
We also have 
$$\log \rk (S^d\mathscr{E})\leq d\log ((M+d)/d!^{1/d})\leq \poly(M,d),$$
$$\log \rk (\Lambda^{M-n}\mathscr{E})\leq (n+1)\log (M+1)\leq \poly(n,M),$$
and 
$$\log \sqrt{(M-n)!}\leq \frac{1}{2} (M-n)\log (M-n)\leq \poly(n,M).$$
Thus, $C_3\leq \poly(n, M, d)$, 
so $r<A_3\leq  \poly_{n,K}(M, d, \varepsilon^{-1})$.
\end{proof}

\section{Proof of main theorem}\label{Proof of main theorem}
We first prove a claim in the setup in the introduction. 

\begin{lemma}\label{Galois action unramification}
If $\mc{X}$ has an infinite tower $\{\mc{X}_m\}$ of finite \'{e}tale geometrically Galois coverings with $\Gamma_{p^m}$-structures, and if there exists a finite set $T$ of exceptional primes for $\{\mc{X}_m\}$, 
then the Galois action $\theta_m:\Gal(\ol{E}/E)\to \Aut(G_m)$ defined by 
$\theta_m(\sigma)(\gamma)=\sigma_{\mc{X}_m}\circ \gamma\circ\sigma_{\mc{X}_m}^{-1}$ in Section \ref{Statements of results} is unramified outside $T$ for all $m$.
\end{lemma}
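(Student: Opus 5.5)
We must show that for every prime $\mf{q}\notin T$ of $E$, the inertia group $I_{\mf{q}}\subset\Gal(\ol{E}/E)$ (for any choice of prime of $\ol{E}$ above $\mf{q}$) acts trivially on $G_m$ via $\theta_m$. The plan is to show that the geometric deck group $G_m$, viewed as a finite étale group scheme (or at least a constant-after-unramified-extension group), extends over $\mc{O}_{E,T}$. The Galois action on its geometric points then factors through $\pi_1(\Spec \mc{O}_{E,T})=\Gal(E_T/E)$.

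\textbf{Step 1: the automorphism scheme over $\mc{O}_{E,T}$.} By exceptionality, $f_m$ extends to a finite étale surjection $\ms{X}_m\to\ms{X}$, with $\ms{X}$ smooth and proper with geometrically connected fibers over $S:=\Spec\mc{O}_{E,T}$. Since $\ms{X}$ is proper smooth with geometrically integral generic fiber, Stein factorization shows that all fibers are geometrically connected. Consider the functor $\underline{\Aut}_{\ms{X}}(\ms{X}_m)$ on $S$-schemes. Because $\ms{X}_m\to\ms{X}$ is finite étale and $\ms{X}\to S$ is proper smooth with geometrically connected fibers, this functor is represented by a finite étale $S$-scheme $\mc{G}_m$. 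I would prove this by descent of finite étale covers: after a finite étale base change $S'\to S$ trivializing the relevant data, the automorphisms are locally constant, using connectedness of the fibers of $\ms{X}$. Concretely, $\underline{\Aut}$ is a closed and open subscheme of $\underline{\mathrm{Hom}}_{\ms{X}}(\ms{X}_m,\ms{X}_m)$, which is the Weil restriction along $\ms{X}\to S$ of the finite étale $\ms{X}$-scheme $\ms{X}_m\times_{\ms{X}}\ms{X}_m$, restricted to sections. Sections of a finite étale cover over a proper flat base with geometrically connected fibers form a finite étale $S$-scheme, since sections correspond to connected components of degree one. This is the main obstacle, and I would cite SGA1 (or the theory of the étale fundamental group in families) for this representability.

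\textbf{Step 2: comparison with $\theta_m$.} The generic fiber of $\mc{G}_m$ is the finite étale $E$-group scheme $\underline{\Aut}_{\mc{X}}(\mc{X}_m)$, whose $\ol{E}$-points are $G_m$. The natural Galois action on these $\ol{E}$-points, $\gamma\mapsto \sigma_{\mc{X}_m}\circ\gamma\circ\sigma_{\mc{X}_m}^{-1}$, is exactly $\theta_m$; this is a direct check from the definition of $\sigma_{\mc{X}_m}$. It also shows that $\theta_m$ is a homomorphism.

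\textbf{Step 3: conclusion.} A finite étale $S$-scheme corresponds to a finite continuous $\pi_1(S,\ol{\eta})$-set, and $\pi_1(S)=\Gal(E_T/E)$. The Galois action on $\mc{G}_m(\ol{E})$ therefore factors through $\Gal(\ol{E}/E)\to\Gal(E_T/E)$, so every inertia group at a prime outside $T$ acts trivially. Equivalently, the field of definition of all elements of $G_m$ is a finite extension of $E$ unramified outside $T$. This holds for each $m$, which proves Lemma~\ref{Galois action unramification}. An alternative route for Step 1, if representability is inconvenient, is specialization. For $\mf{q}\notin T$, each $\gamma\in G_m$ is defined over the strict henselization $\mc{O}^{sh}_{\mf{q}}$, because the restriction $\ms{X}_m\times\ms{X}_m$ over the connected scheme $\ms{X}_{\mc{O}^{sh}}$ has graph components that are finite étale. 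Inertia, which fixes $\mc{O}^{sh}$, then fixes $\gamma$.
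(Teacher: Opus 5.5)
Your proof is correct, but it takes a more global route than the paper's. The paper argues locally at each finite prime $v\notin T$. Smoothness of $\ms{X}$ and Hensel's lemma give an $\mc{O}_M$-point $x$ of $\ms{X}$, where $M$ is the maximal unramified extension of $E_v$. Pulling back $\ms{X}_m\to\ms{X}$ along $x$ gives a finite \'etale algebra over the strictly henselian ring $\mc{O}_M$, which therefore splits completely. Hence every geometric point of the fiber over $x$ is $M$-rational and fixed by $I_v\subset\Gal(\ol{E_v}/M)$. The paper leaves the last step implicit: for $\sigma\in I_v$, the deck transformations $\theta_m(\sigma)(\gamma)$ and $\gamma$ agree at a point of this fiber, so they coincide because deck transformations of the connected cover $(\mc{X}_m)_{\ol{E}}$ act freely.

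You instead realize $G_m$ as the $\ol{E}$-points of the finite \'etale $\mc{O}_{E,T}$-group scheme $\underline{\Aut}_{\ms{X}}(\ms{X}_m)$, and read off that $\theta_m$ factors through $\pi_1(\Spec\mc{O}_{E,T})=\Gal(E_T/E)$. This gives in one stroke the factorization through $\Gal(E_T/E)$ that the paper uses afterwards. The cost is a representability theorem for Hom-schemes of finite \'etale covers over a proper smooth base with geometrically connected fibers (Stein factorization in families, SGA1). The paper needs only one Hensel lift and the splitting of \'etale algebras over a strictly henselian ring.

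There is a small slip in your Step 1. Morphisms $\ms{X}_m\to\ms{X}_m$ over $\ms{X}$ are sections of $\mathrm{pr}_1\colon\ms{X}_m\times_{\ms{X}}\ms{X}_m\to\ms{X}_m$. So the Weil restriction should be taken along $\ms{X}_m\to S$, whose fibers are also geometrically connected, or you should use the internal Hom scheme over $\ms{X}$. Sections over $\ms{X}$ of the fiber product are merely pairs of sections of $\ms{X}_m\to\ms{X}$.

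Your closing alternative is close in spirit to the paper, but its justification is thin as written. What is actually needed is that each connected component of $(\ms{X}_m\times_{\ms{X}}\ms{X}_m)_{\mc{O}^{sh}}$ has geometrically connected generic fiber, which again requires Stein factorization over the strictly henselian base. The paper's use of a single integral point plus freeness of the deck action avoids this.
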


\begin{proof}
Let $v$ be a prime of $E$ outside $T$.
Let $I_v\subset \Gal(\ol{E_v}/E_v)$ be the inertia group at $v$.
By \cite[Lemma 0BSW]{Stacks}, we have $I_v\subset \Gal(\ol{E_v}/M)$, 
where $M$ is the maximal unramified extension of $E_v$.
It suffices to prove that the geometric points of a fiber of $f_m$ are defined over $M$.
Since $v\not\in T$, the morphism $f_m:\mc{X}_m\to \mc{X}$ extends to a finite \'{e}tale surjective morphism 
$g_m:(\ms{X}_m)_{\mc{O}_{E_v}}\to \ms{X}_{\mc{O}_{E_v}}$  over $\mc{O}_{E_v}$.
Let $x: \Spec \mathcal{O}_{M}\to \ms{X}_{\mc{O}_{M}}$ be an $\mathcal{O}_{M}$-point, which exists since $\mc{O}_M$ is Henselian and $\mc{O}_M/\mf{m}$ is separably closed, 
where $\mf{m}$ is the maximal ideal of $\mc{O}_{M}$.
By pulling back $(g_m)_{\mathcal{O}_M}$ along $x$, we obtain a finite \'{e}tale algebra $R$ over $\mathcal{O}_{M}$.
Since $\mathcal{O}_M/\mf{m}$ is separably closed,  $R/\mf{m}R$ is isomorphic to a finite product of copies of $\mc{O}_{M}/\mf{m}$ by \cite[Lemma 00U3]{Stacks}.
Since $\mc{O}_M$ is a Henselian local ring, $R$ is isomorphic to a finite product of copies of $\mathcal{O}_M$ by \cite[Lemma 04GK]{Stacks}. 
Therefore, the fiber at $x$ over $M$ is a disjoint union of $\Spec M$.
\end{proof}

We now prove Theorem \ref{main theorem} which is restated here:

\begin{thm}\label{main theorem, shimura variety, restate} 
Suppose $\mc{X}$ has a tower $\mf{T}:=\{\mc{X}_m\}$ with $\Gamma_{p^m}$-structures. 
Suppose there exists an exceptional set $T$ of primes for $\mf{T}$. 
If $\mf{T}$ satisfies height comparison on twists, and if the cardinality of $H^1(\Gal(E_T/E),G_m)$ is at most $\poly_{\mc{X},\mf{T},T}(|\Gamma_{p^m}|)$, 
then $\mc{X}$ has a polylogarithmic-size covering of rational points. 
\end{thm}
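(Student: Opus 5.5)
Fix $B>e$. The plan is to pick a level $m$ depending on $B$ so that $\deg f_m\asymp (\log B)^n$, and then cover rational points level by level.

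\textbf{Step 1: lifting to twists.} By the descent argument of \cite[p.~22]{Sko01}, every $P\in\mc{X}(E)$ lifts to an $E$-point of some twist $\mc{X}_m^{\tau}$, $\tau\in H^1(\Gal(\ol{E}/E),G_m)$. I expect we may restrict to $\tau\in H^1(\Gal(E_T/E),G_m)$. The reason is that the fibre $f_m^{-1}(P)$ is a $G_m$-torsor over $E$. Since $P$ extends to an $\mc{O}_{E,T}$-point of $\ms{X}$ by properness, and $\ms{X}_m\to\ms{X}$ is finite \'etale, this torsor extends over $\mc{O}_{E,T}$. The argument of Lemma \ref{Galois action unramification} then shows its class is unramified outside $T$. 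So the number of twists is at most $\poly_{\mc{X},\mf{T},T}(|\Gamma_{p^m}|)$. Moreover $|\Gamma_{p^m}|=|G_m|=\deg f_m$, since $f_m$ is geometrically Galois.

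\textbf{Step 2: heights and degree on a twist.} Fix a twist and use the model $\ms{X}_m^\tau\hookrightarrow\mb{P}^{M}_{\mc{O}_E}$ from Definition \ref{height comparison on twists}, with $M=M(m,\tau)\le\poly(\deg f_m)$. The degree of the twist is
$$d=(f_m^{\tau*}L)^n=\deg f_m\cdot (L^n).$$
I will arrange $L$ to be ample; if necessary, I would replace it by an ample bundle comparable to it. Every lifted point $Q$ with $H_L(f_m^\tau(Q))\le B$ satisfies
$$[E:\QQ]\,h(Q)<[E:\QQ]\bigl(C_1+C_2\log B+C_3(\deg f_m)^{1/n}\bigr).$$
Hence $Q\in S(\mc{X}_m^\tau,B')$ with $\log B'=O(\log B+(\deg f_m)^{1/n})$.

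\textbf{Step 3: choosing the level and applying Theorem \ref{Arakelov Bombieri-Pila}.} Choose $m$ minimal with $d^{1/n}\ge c\log B$ for a large constant $c$. Because $\Gamma$ is infinite, $|\Gamma_{p^m}|\to\infty$. For the minimal such $m$ to give $d=\poly(\log B)$, I need $|\Gamma_{p^{m}}|\le|\Gamma_{p^{m-1}}|\cdot p^{k^2}$. This holds because the kernel of reduction from level $p^m$ to $p^{m-1}$ has order at most $p^{k^2}$. I expect this bookkeeping to be the main obstacle.

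With $\varepsilon=1$ fixed, Theorem \ref{Arakelov Bombieri-Pila} applies with $K=E$ provided $B'>B_0$. Otherwise, enlarge $B'$ to $B_0$, which is $\exp(O(d^{1/n}))$ and so harmless. The exponent of the resulting count is
$$(1+\varepsilon)(n+1)d^{-1/n}\bigl(\log B'+[E:\QQ]\log(M+1)\bigr)=O(1),$$
because $\log B'=O(d^{1/n})$ and $\log M=O(\log d)\le O(d^{1/n})$. Therefore $S_1(\mc{X}_m^\tau,B')$ is covered by $\poly(M,d)=\poly(\log B)$ hypersurfaces of degree $D=\poly(M,d)$ not containing $\mc{X}_m^\tau$. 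Since $\mc{X}_m^\tau$ is smooth, being \'etale over smooth $\mc{X}$, all its points are regular, so $S_1$ is the whole set.

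\textbf{Step 4: pushing forward.} Intersecting $\mc{X}_m^\tau$ with each hypersurface gives components of dimension $<n$. By B\'ezout there are at most $dD$ of them, each of degree at most $dD$. Their images under the finite map $f_m^\tau$ are irreducible subvarieties of $\mc{X}$ of dimension $<n$. Their degree with respect to $L$ is bounded by the $f_m^{\tau*}L$-degree upstairs, hence $\poly(\log B)$. One caveat: the image components are irreducible over $E$, though not necessarily geometrically irreducible, which is acceptable. Summing over $\poly(\deg f_m)=\poly(\log B)$ twists gives the required polylogarithmic covering.
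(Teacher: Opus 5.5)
Your proposal is correct and follows the paper's proof essentially step for step. You choose the minimal level with $|\Gamma_{p^m}|\gtrsim(\log B)^n$, bounded above via the $p^{k^2}$ reduction kernel, and lift points to twists indexed by $H^1(\mathrm{Gal}(E_T/E),G_m)$ via Skorobogatov. You then feed the height comparison into Theorem~\ref{Arakelov Bombieri-Pila} with $\varepsilon=1$ so that every exponent $d_m^{-1/n}(\log B'+\log(M+1))$ is $O(1)$, and push components forward by the projection formula. Your hedge about replacing $L$ is unnecessary: $L$ is automatically ample, since $f_m^{\tau*}L\simeq\iota_m^{\tau*}\mathcal{O}(1)$ is ample and $f_m^\tau$ is finite surjective.
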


\begin{proof}
For any $B>e$, let $c(B):=(\log B)^n$ and
choose the smallest positive integer $m=m(B)$ such that $|\Gamma_{p^m}|\geq c(B)$. 
As $\Gamma_{p^m}$ is the image of $\Gamma$ in  $\bGL_k(\ZZ/p^m\ZZ)$,
the order $|\Gamma_{p^m}|$ is at most the product of $|\Gamma_{p^{m-1}}|$ and the size of the kernel of the  map $\bGL_k(\ZZ/p^m\ZZ)\rightarrow \bGL_k(\ZZ/p^{m-1}\ZZ)$ induced by the mod $p^{m-1}$ map,
we thus have
\begin{equation}\label{bounding monodromy group, large local system}
|\Gamma_{p^m}|\leq |\Gamma_{p^{m-1}}|\cdot p^{k^2}< c(B)\cdot p^{k^2} =(\log B)^n\cdot p^{k^2}.
\end{equation}

By height comparison on twists, there exist a line bundle $L$ on $\mc{X}$ and positive constants $C_1,C_2,C_3$, depending only on $\mf{T}$ and $T$, such that for any positive integer $m$ and any $\tau\in H^1(\Gal(E_T/E),G_m)$, there exists an $\mc{O}_{E}$-model $\ms{X}_m^{\tau}$ of the twist $\mc{X}_m^{\tau}$ with a projective embedding $\iota_m^{\tau}:\ms{X}_m^{\tau}\hookrightarrow \mathbb{P}^{M(m,\tau)}_{\mc{O}_{E}}$ such that the following holds: 
\begin{itemize}
\item $M(m,\tau) \leq \poly_{\mc{X},\mf{T},T}(\deg f_m)$,
\item  $f_m^{\tau*}L\simeq \iota_m^{\tau*}\mc{O}(1)_{E}$,
\item  for any $P\in \mc{X}_m^{\tau}(E)$ and any $B>e$ with $H_{L}(f_m^{\tau}(P))\leq B$, we have 
 \begin{equation}\label{conjectural height bound}
 h_{\iota_m^{\tau*}\ol{\mc{O}(1)}}(P)< C_1+C_2\log B+ C_3(\deg f_m)^{1/n}.
 \end{equation}
\end{itemize}

We have
$$\deg (\mc{X}_m,f_m^{*}L)=\deg ((\mc{X}_m)_{\ol{E}},(f_m^{*}L)_{\ol{E}})=\deg ((\mc{X}_m^{\tau})_{\ol{E}},(f_m^{\tau*}L)_{\ol{E}})=\deg (\mc{X}_m^{\tau},f_m^{\tau*}L).$$
We let
$$d_m:=\deg (\mc{X}_m, \iota^*_m\mc{O}(1)_E)=\deg (\mc{X}_m,f_m^{*}L)=\deg (\mc{X}_m^{\tau},f_m^{\tau*}L)=\deg (\mc{X}_m^{\tau}, \iota_m^{\tau*}\mc{O}(1)_{E}).$$
Let $D(m,\tau)$ be the smallest integer greater than 
$$\max\left\{2(d_m^{-1/n}(n+3)/2 +d_m-2), 2(M(m,\tau)-n)(d_m-1)+n+2\right\}.$$
Let $v$ be the least common multiple of $1,2,\dots, [E:\QQ]$ and let
$$B_m:=\exp\left(\exp(33.3) \cdot 3d_m^{1/n}\frac{nv}{2(n+1)} \right).$$
By the projection formula, we have 
$$d_m=\deg (\mc{X}_m,f_m^*L)=|\Gamma_{p^m}|\deg L.$$
By \eqref{bounding monodromy group, large local system}, we then have
\begin{align*}
B_m
&<\exp\left(\exp(33.3) \cdot 3\log B\cdot p^{k^2/n}(\deg L)^{1/n}\frac{nv}{2(n+1)} \right)
\\&=B^{C_4},
\end{align*}
where $C_4$ is some constant depending only on $\mc{X}, \mf{T}$ and $T$.
By Theorem \ref{Arakelov Bombieri-Pila} with $\varepsilon=1$, for any 
$B'\geq \max\{B^{C_4},e\},$
the set of $E$-points $Q$ of $\mc{X}_m^{\tau}$ with exponential Arakelov height 
$H_{\iota_m^{\tau*}\ol{\mc{O}(1)}}(Q)\leq B'$
can be covered by 
$$d_mD(m,\tau)\cdot \poly_{n,E}(M(m,\tau),d_m)\cdot (M(m,\tau)+1)^{2[E:\QQ](n+1)d_m^{-1/n}}B'^{2(n+1)d_m^{-1/n}}$$
irreducible subvarieties over $E$ of dimensions $< \dim \mathcal{X}$ and degree at most $d_mD(m,\tau)$.

Suppose $P$ is an $\mathcal{O}_{E,T}$-point of $\mathscr{X}$ with height $H_{L}(P) \leq B$. 
The fiber of $\ms{X}_m\to \ms{X}$ above $P$ corresponds to an element in $H^1(\Gal(E_T/E), G_m)$.
Then by \cite[p. 22]{Sko01}, $f_m^{\tau}(Q)=P$ for some $\tau=[\rho]\in H^1(\Gal(E_T/E), G_m)$ and some $Q\in \mc{X}_m^{\tau}(E)$.
By \eqref{bounding monodromy group, large local system} and \eqref{conjectural height bound}, for some constants $C_5\geq 1$ and $C_6\geq C_4$ depending only on $\mc{X}, \mf{T}$ and $T$, we have
\begin{align*}
H_{\iota_m^{\tau*}\ol{\mc{O}(1)}}(Q)
&=\exp([E:\QQ] h_{\iota^{\tau*}_m\ol{\mc{O}(1)}}(Q))
\\&< \exp([E:\QQ](C_1+C_2\log B+ C_3|\Gamma_{p^m}|^{1/n}))
 \\&<C_5B^{C_6[E:\QQ]}.
\end{align*}
It follows that the set of $\mathcal{O}_{E,T}$-points of $\mathcal{X}$ with exponential height at most $B$, that are images of $E$-points in $\mc{X}_m^{\tau}$ under $f_{m}^{\tau}$ can be covered by 
\begin{align*}
\kappa(B):=& d_mD(m,\tau)\cdot \poly_{n,E}(M(m,\tau),d_m) \cdot (M(m,\tau)+1)^{2[E:\QQ](n+1)d_m^{-1/n}}
\\&\quad\quad\quad\quad\quad 
\cdot C_5^{2(n+1)d_m^{-1/n}} B^{C_6[E:\QQ]\cdot 2(n+1)d_m^{-1/n}}
\end{align*}
irreducible subvarieties over $E$ of dimensions $< \dim \mathcal{X}$ and degree at most $d_m^2D(m,\tau)$ with respect to $L$ (this assertion about the degree uses the isomorphism $f_m^{\tau*}L\simeq \iota^{\tau*}_m\mc{O}(1)_E$ and the projection formula).
We have 
$$M(m,\tau)\leq \poly_{\mc{X},\mf{T},T}(|\Gamma_{p^m}|)\leq \poly_{\mc{X},\mf{T},T}(\log B).$$
Similarly,
$$d_m= |\Gamma_{p^m}|\deg L\leq \poly_{\mc{X},\mf{T},T}(\log B).$$
Since 
$$d_m=|\Gamma_{p^m}|\deg L \geq (\log B)^{n}\deg L,$$ 
we have 
$$B^{d_m^{-1/n}}\leq B^{(\log B)^{-1}(\deg L)^{-1/n}}=O_{\mc{X},\mf{T},T}(1).$$
Similarly, 
$$C_5^{2(n+1)d_m^{-1/n}} = O_{\mc{X},\mf{T},T}(1)$$
and 
$$(M(m,\tau)+1)^{2[E:\QQ](n+1)d_m^{-1/n}}=O_{\mc{X},\mf{T},T}(1).$$
Therefore, $\kappa(B)$ is at most $\poly_{\mc{X},\mf{T},T}(\log B)$ when $B>e$.

By properness of $\mc{X}$, the set of $E$-points in $\mc{X}$ is in bijection with the set of $\mc{O}_{E,T}$-points on $\ms{X}$. 
The number of twists is at most the cardinality of $H^1(\Gal(E_T/E),G_m)$, which by assumption is at most $\poly_{\mc{X},\mf{T},T}(|\Gamma_{p^m}|)$, and thus at most $\poly_{\mc{X},\mf{T},T}(\log B)$.
Therefore, when $B>e$, the set of $E$-points in $\mc{X}$ with exponential height at most $B$ can be covered by $\poly_{\mc{X},\mf{T},T}(\log B)$ irreducible subvarieties over $E$ of dimensions $< \dim \mathcal{X}$ and degree at most $d_m^2D(m,\tau)\leq \poly_{\mc{X},\mf{T},T}(\log B)$ with respect to $L$.
By defining a universally fixed mapping $\mc{X}\mapsto (\mc{X},\mf{T},T)$, where $\mc{X}$ satisfies the conditions in our theorem, we have   $\poly_{\mc{X},\mf{T},T}(\cdot)=\poly_{\mc{X}}(\cdot)$ by convention.
\end{proof}

\section{Proof of Theorem \ref{abelian variety, height comparison}}\label{Proof of Neron's theorem}
Let $\mc{A}$ be an abelian variety of dimension $g$ over a number field $E$.
We will prove that $\mc{A}$ has a polylogarithmic-size covering of rational points by using Theorem \ref{main theorem}.
In Section \ref{Twists of finite etale covers}, we prove the polynomial bound for the size of the first Galois cohomology.
In Section \ref{Basis of global sections of twisted line bundles} and Section \ref{Projective embeddings and a height identity}, we construct integral models for the twists of the finite \'{e}tale covers and their projective embeddings, using Koizumi's theorem \cite{Koi76}, twisted trace and geometry of numbers.
We use a result by de Jong and Shokrieh \cite{dJS22} to express the difference between the N\'{e}ron-Tate height and the Arakelov height in terms of norms of sections with respect to canonical metrics introduced by Zhang \cite{Zha95}.
The result in geometry of numbers that we apply is an upper bound for the largest successive minima proved by de Boer \emph{et al.} \cite[Theorem A.3.1]{dBPW25}, which is an $L^{\infty}$ version of Bhargava \emph{et al.} \cite[Theorem 1.6]{BSTTTZ20}.
It allows us to obtain the desired bound for the norms of sections in Section \ref{Bounds for norms of sections} and Section \ref{Height comparison on twisted abelian varieties}.

\subsection{Twists of finite \'{e}tale covers}\label{Twists of finite etale covers}
Let $S$ be a set of finite primes of $E$ such that $\mc{A}$ spreads out to an abelian scheme $\ms{A}_{\mathcal{O}_{E,S}}$ over $\mathcal{O}_{E,S}$.
Let $p$ be a prime number which is not below any prime in $S$.
For each positive integer $m$, let $[p^m]:\ms{A}_{\mathcal{O}_{E,S}}\to \ms{A}_{\mathcal{O}_{E,S}}$ be the multiplication by $p^m$ map.
Let $f_m:\mc{A}\to \mc{A}$ be the restriction of $[p^m]$ to the generic fiber.
It is a finite \'{e}tale morphism of degree $p^{2mg}$.
Let $k:=2g+1$.
Fix an embedding $\bG_a^{2g}\hookrightarrow \bGL_{k}$ defined by 
$$(a_1,\dots, a_{2g})\mapsto I_{k}+\sum_{i=1}^{2g} a_i E_{i,2g+1},$$
where $I_{k}$ is the identity matrix and $E_{i,2g+1}$ is the matrix with 1 in the $(i,2g+1)$-entry and $0$ elsewhere.
Let $\Gamma$ be the image of $\mathbb{Z}^{2g}$ under this embedding.
The geometric deck group $G_m$ of $f_m$ is isomorphic to the image $\Gamma_{p^m}$ of $\Gamma$ under the mod $p^m$ map $\bGL_k(\ZZ)\rightarrow \bGL_k(\ZZ/p^m\ZZ)$.
The morphisms $f_m$ form a tower $\mf{T}$ with $\Gamma_{p^m}$-structure in the sense of Definition \ref{Gamma prime power structure}.
Let $T$ be the union of $S$ and the set of primes of $E$ above $p$.
The set $T$ is exceptional in the sense of Definition \ref{exceptional primes}.

Let $\theta_m:\Gal(\ol{E}/E)\to \Aut(G_m)$ be the action defined in Section \ref{Statements of results}.
There is a $\Gal(\ol{E}/E)$-equivariant group isomorphism $\mc{A}[p^m](\ol{E})\simeq G_m$ given by $x\mapsto t_x$, where $t_x$ is the translation by $x$.
Since this Galois action is unramified outside $T$, it induces a $\Gal(E_T/E)$-action on $\mc{A}[p^m](\ol{E})\simeq G_m$,
where $E_T$ is the maximal extension of $E$ unramified outside $T$.
Let $G_{E,T}:= \Gal(E_T/E)$.
Let $\theta'_m: G_{E,T}\to \Aut(G_m)$ be the action. 
Every continuous cocycle $\rho: G_{E,T}\to G_m$ induces a homomorphism 
$$\theta_{m,\rho}:G_{E,T}\to G_m\rtimes \Aut(G_m)$$
given by $\sigma\mapsto (\rho(\sigma), \theta'_m(\sigma))$.
By Hermite-Minkowski \cite[Theorem 2.13]{Neu99}, there are only finitely many such homomorphisms, so $H^1(G_{E,T},G_m)$ has finite cardinality.

\begin{lemma}\label{Galois cohomology bound}
The cardinality of $H^1(G_{E,T}, G_m)$ is at most $\poly_{\mc{A},\mf{T}, T}(|G_m|)$.
\end{lemma}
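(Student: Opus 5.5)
We need to bound $|H^1(G_{E,T},\mc{A}[p^m])|$ polynomially in $|G_m|=p^{2mg}$. Here $G_m\simeq \mc{A}[p^m](\ol{E})\simeq(\ZZ/p^m\ZZ)^{2g}$ is abelian, so $H^1$ is an honest cohomology group, and we can use dévissage along the filtration $\mc{A}[p]\subset\mc{A}[p^2]\subset\cdots\subset\mc{A}[p^m]$. The basic step is the exact sequence of $G_{E,T}$-modules
$$0\to \mc{A}[p^{m-1}]\to \mc{A}[p^m]\xrightarrow{[p^{m-1}]}\mc{A}[p]\to 0.$$
Its long exact cohomology sequence gives
$$|H^1(G_{E,T},\mc{A}[p^m])|\leq |H^1(G_{E,T},\mc{A}[p^{m-1}])|\cdot|H^1(G_{E,T},\mc{A}[p])|.$$
By induction,
$$|H^1(G_{E,T},\mc{A}[p^m])|\leq |H^1(G_{E,T},\mc{A}[p])|^m.$$

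Set $h:=\dim_{\mb{F}_p}H^1(G_{E,T},\mc{A}[p])$. This is finite: by Hermite--Minkowski (as in the paragraph before the lemma) the group $H^1(G_{E,T},\mc{A}[p])$ is finite, and it is an $\mb{F}_p$-vector space. The integer $h$ depends only on $\mc{A},p,T$, hence only on $(\mc{A},\mf{T},T)$. We then get
$$|H^1(G_{E,T},G_m)|\leq p^{hm}=(p^{2gm})^{h/(2g)}=|G_m|^{h/(2g)},$$
which is a polynomial in $|G_m|$ with exponent depending only on the data. This already gives the lemma. To make $h$ explicit, one can bound it using Tate's global Euler characteristic formula together with the finiteness of $H^2(G_{E,T},\mc{A}[p])$, or more crudely by $h\leq 2g\cdot\dim_{\mb{F}_p}H^1(G_{E',T},\mb{F}_p)$, where $E'=E(\mc{A}[p])$. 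The latter uses inflation--restriction to the finite extension $E'/E$, which is unramified outside $T$, and then class field theory: $\dim H^1(G_{E',T},\mb{F}_p)$ is bounded by the $p$-rank of the ray class group of $E'$ of conductor supported on $T$. This refinement is optional.

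The main obstacle is modest: making sure the exponent $m$ converts into a polynomial in $|G_m|$. This holds because $|G_m|$ grows exponentially in $m$ with fixed base $p^{2g}$. One also has to check that the long exact sequence uses continuous cohomology of the profinite group $G_{E,T}$ with finite discrete coefficients, which is standard. Everything else is routine.
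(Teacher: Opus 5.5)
Your proposal is correct and is essentially the paper's proof: the paper uses the same exact sequence $0\to\mc{A}[p^{m-1}]\to\mc{A}[p^m]\to\mc{A}[p]\to 0$, the same induction giving $|H^1(G_{E,T},\mc{A}[p])|^m=p^{rm}$, and the same rewriting $p^{rm}=(p^{2mg})^{r/2g}$. One caveat concerns only your optional refinement: inflation--restriction gives $h\leq \dim_{\mb{F}_p}H^1(\Gal(E'/E),\mc{A}[p])+2g\dim_{\mb{F}_p}H^1(G_{E',T},\mb{F}_p)$, so you are missing the first term (it can be nonzero when $p$ divides $[E':E]$), but this does not affect the lemma.
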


\begin{proof}
We have the following short exact sequence of $G_{E,T}$-modules:
$$0\to \mc{A}[p^{m-1}](\ol{E})\to \mc{A}[p^m](\ol{E})\to \mc{A}[p](\ol{E})\to 0,$$
which induces an exact sequence
$$H^1(G_{E,T},\mc{A}[p^{m-1}](\ol{E}))\to H^1(G_{E,T},\mc{A}[p^m](\ol{E}))\to H^1(G_{E,T},\mc{A}[p](\ol{E})).$$
The cardinality $c$ of $H^1(G_{E,T},\mc{A}[p](\ol{E}))$ depends only on $\mc{A}$, $T$ and $p$.
Write $c=p^r$, where $r\geq 0$. 
By induction, the cardinality of $H^1(G_{E,T},\mc{A}[p^m](\ol{E}))$ is at most $c^m=p^{rm}=(p^{2mg})^{r/2g}\leq \poly_{\mc{A},T,p}(p^{2mg})$.
The lemma then follows from the $G_{E,T}$-equivariant isomorphism $\mc{A}[p^m](\ol{E})\simeq G_m$ above.
\end{proof}

Let $E_{m,\rho}$ be the fixed field of the kernel of $\theta_{m,\rho}$.

\begin{lemma}\label{bound of degree of extension}
We have $[E_{m,\rho}:E]\leq |G_m|^{2g+1}$.
\end{lemma}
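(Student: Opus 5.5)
Since $E_{m,\rho}$ is the fixed field of $\ker\theta_{m,\rho}$, the extension $E_{m,\rho}/E$ is Galois (inside $E_T$), and $\Gal(E_{m,\rho}/E)$ is isomorphic to the image of $\theta_{m,\rho}$ in $G_m\rtimes \Aut(G_m)$. So it suffices to bound the size of this image. First we check that $\theta_{m,\rho}$ is a group homomorphism into the semidirect product, where the multiplication is $(a,\alpha)(b,\beta)=(a\,\alpha(b),\alpha\beta)$. This is exactly the cocycle condition $\rho(\sigma\tau)=\rho(\sigma)\,\theta'_m(\sigma)(\rho(\tau))$. Because the action is continuous and $G_m$ is finite, the kernel is open, so $E_{m,\rho}$ is a finite extension and $[E_{m,\rho}:E]=|\operatorname{im}\theta_{m,\rho}|$.

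Next, the image lies in $G_m\rtimes H$, where $H:=\theta'_m(G_{E,T})\subseteq \Aut(G_m)$. Hence
\[
[E_{m,\rho}:E]\le |G_m|\cdot|H|.
\]
The key point is to bound $|H|$ by a power of $|G_m|$, not by $|\Aut(G_m)|$ in general. We have $G_m\simeq \mc{A}[p^m](\ol{E})\simeq (\ZZ/p^m\ZZ)^{2g}$ equivariantly, so $H$ lies in $\Aut((\ZZ/p^m\ZZ)^{2g})=\bGL_{2g}(\ZZ/p^m\ZZ)$. This gives
\[
|H|\le |\bGL_{2g}(\ZZ/p^m\ZZ)|\le |M_{2g}(\ZZ/p^m\ZZ)| = p^{m(2g)^2}=(p^{2mg})^{2g}=|G_m|^{2g}.
\]
Combining the two bounds yields $[E_{m,\rho}:E]\le |G_m|^{2g+1}$.

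The only delicate point is the identification of $\Gal(E_{m,\rho}/E)$ with the image: continuity, finiteness and normality of the kernel. These follow because $\theta_{m,\rho}$ is a continuous homomorphism to a finite group. The counting itself is elementary, with $|G_m|=p^{2mg}$ coming from $\deg f_m$.
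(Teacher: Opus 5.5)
Your proof is correct and is the paper's argument: you bound $[E_{m,\rho}:E]$ by the order of the image of $\theta_{m,\rho}$ in $G_m\rtimes\Aut(G_m)$, giving $|G_m|\cdot|\bGL_{2g}(\ZZ/p^m\ZZ)|\le p^{2mg}\cdot p^{4g^2m}=|G_m|^{2g+1}$. You also spell out what the paper leaves implicit: the cocycle condition makes $\theta_{m,\rho}$ a continuous homomorphism into a finite group, so $[E_{m,\rho}:E]$ equals the size of its image.
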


\begin{proof}
Since $G_m\simeq (\ZZ/p^m\ZZ)^{2g}$, we have $\Aut(G_m)\simeq \tb{GL}_{2g}(\ZZ/p^m\ZZ)$.
Hence, $[E_{m,\rho}:E]\leq |G_m||\Aut(G_m)|\leq p^{2mg}\cdot p^{4g^2m}=p^{2mg(1+2g)}$.
\end{proof}

Let $\tau=[\rho]$ be a class in $H^1(G_{E,T}, G_m)$,
let $\mc{A}^{\tau}$ be the corresponding \emph{twist} given by \cite[Theorem 4.5.2]{Poonen17}.
By \cite[Remark 4.5.4]{Poonen17}, there exists an isomorphism $\lambda: \mc{A}_{E_T}\to \mc{A}^{\tau}_{E_T}$ such that 
\begin{equation}\label{twist and conjugation}
\lambda^{\sigma}:=\sigma_{\mc{A}^{\tau}}\circ\lambda\circ\sigma_{\mc{A}}^{-1}=\lambda\circ \rho(\sigma)
\end{equation} 
for any $\sigma\in \Gal(E_T/E)$, where $\sigma_{\mathcal{A}}: \mc{A}_{E_T}\to \mc{A}_{E_T}$ is the morphism induced by $\sigma^{-1}$.
For any $\sigma\in \Gal(E_T/E_{m,\rho})$, we have $\lambda^{\sigma}=\lambda\circ \rho(\sigma)=\lambda$,
so $\lambda$ is defined over $E_{m,\rho}$ by Galois descent.
Let $f_m^{\tau}:\mc{A}^{\tau}\ra \mc{A}$ be the descended finite morphism characterized by $(f_m^{\tau})_{E_{m,\rho}}\circ \lambda = (f_m)_{E_{m,\rho}}$.
Let $\ol{\rho}:\Gal(E_{m,\rho}/E)\to G_m$ be the continuous cocycle induced by $\rho$.
It satisfies the cocycle condition saying that for any $\sigma,\eta\in \Gal(E_{m,\rho}/E)$,
\begin{equation}\label{cocycle condition}
\ol{\rho}(\sigma\eta)=\ol{\rho}(\sigma)\cdot \theta_m(\sigma)(\ol{\rho}(\eta)).
\end{equation}

\subsection{Basis of global sections of twisted line bundles}\label{Basis of global sections of twisted line bundles}
Let $\mc{K}$ be a symmetric ample line bundle on $\mc{A}$, i.e. $(-1)^*\mc{K}=\mc{K}$.
Let $\mc{M}:= (f_m^*\mc{K})^{\otimes 3}$, which is very ample \cite[\S 17]{Mum70}.
Let $V:= H^0(\mc{A}_{E_{m,\rho}}, \mc{M}_{E_{m,\rho}})$.
For any $\sigma \in G:=\Gal(E_{m,\rho}/E)$, let $\sigma_{\mc{A}}: \mc{A}_{E_{m,\rho}}\to \mc{A}_{E_{m,\rho}}$ be the morphism induced by $\sigma^{-1}$.
Let $p: \mc{A}_{E_{m,\rho}}\to \mc{A}$ be the natural projection.
Since $\mc{M}$ is defined over $E$, we have 
$$\sigma^*_{\mc{A}}\mc{M}_{E_{m,\rho}}=\sigma^*_{\mc{A}}p^*\mc{M}=(p\circ\sigma_{\mc{A}})^*\mc{M}=p^*\mc{M}=\mc{M}_{E_{m,\rho}}.$$
We can then define a Galois action $G\times V\to V$ by $\sigma\cdot s= (\sigma^{-1}_{\mc{A}})^*s$ for any $\sigma\in G$ and $s\in V$.
This Galois action defines for any $\sigma$ a $\sigma$-semilinear automorphism on $V$.
Let $\mc{M}^{\tau}:=(f_m^{\tau*}\mc{K})^{\otimes 3}$. We have an isomorphism 
$$\lambda^*: H^0(\mc{A}^{\tau}_{E_{m,\rho}}, \mc{M}^{\tau}_{E_{m,\rho}})\simeq V.$$
This identifies $W^{\tau}:= H^0(\mc{A}^{\tau}, \mc{M}^{\tau})$ with a subspace $V^{G,tw}$ of $V$ fixed by a twisted Galois action defined by 
$$\delta_{\sigma}(s):=\sigma\cdot_{tw} s:= (\lambda^{-1}\circ \sigma^{-1}_{\mc{A}^{\tau}}\circ \lambda)^*(s)$$
for any $\sigma\in G$ and $s\in V$.

\begin{lemma}\label{twisted trace composition}
We have $\delta_{\sigma}=(\sigma^{-1}_{\mc{A}})^*(\ol{\rho}(\sigma^{-1}))^*$ and we have $\delta_{\eta}\circ\delta_{\sigma}=\delta_{\eta\sigma}$ for all $\eta,\sigma\in G$.
\end{lemma}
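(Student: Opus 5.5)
The plan is to unwind the definition of $\delta_\sigma$ using the descent relation \eqref{twist and conjugation}, and then deduce the composition law from the cocycle condition \eqref{cocycle condition}.

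\emph{Step 1: the formula for $\delta_\sigma$.} By definition, $\delta_\sigma=(\lambda^{-1}\circ\sigma^{-1}_{\mc{A}^{\tau}}\circ\lambda)^*$. Applying \eqref{twist and conjugation} to $\sigma^{-1}$ gives $\sigma^{-1}_{\mc{A}^{\tau}}\circ\lambda\circ(\sigma^{-1}_{\mc{A}})^{-1}=\lambda\circ\ol{\rho}(\sigma^{-1})$. Here we use that $\sigma\mapsto\sigma_{\mc{A}}$ is a homomorphism: $\sigma_{\mc{A}}$ is induced by $\sigma^{-1}$, and $(\sigma\eta)^{-1}=\eta^{-1}\sigma^{-1}$ is compatible with composition of the induced scheme morphisms, so $(\sigma^{-1})_{\mc{A}}=(\sigma_{\mc{A}})^{-1}$. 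Rearranging, we get $\lambda^{-1}\circ\sigma^{-1}_{\mc{A}^\tau}\circ\lambda=\ol{\rho}(\sigma^{-1})\circ\sigma^{-1}_{\mc{A}}$. Pulling back reverses the order, so $\delta_\sigma=(\sigma^{-1}_{\mc{A}})^*\ol{\rho}(\sigma^{-1})^*$, which is the first claim.

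\emph{Step 2: the composition law.} The cleanest route is to use the definition directly. Write $\phi_\sigma:=\lambda^{-1}\circ\sigma^{-1}_{\mc{A}^\tau}\circ\lambda$. Then
$$\phi_{\sigma}\circ\phi_{\eta}=\lambda^{-1}\circ\sigma^{-1}_{\mc{A}^\tau}\circ\eta^{-1}_{\mc{A}^\tau}\circ\lambda,$$
and pullback gives $\delta_\eta\circ\delta_\sigma=\phi_\eta^*\phi_\sigma^*=(\phi_\sigma\circ\phi_\eta)^*$. It therefore suffices to check that $\sigma^{-1}_{\mc{A}^\tau}\circ\eta^{-1}_{\mc{A}^\tau}=((\eta\sigma)^{-1})_{\mc{A}^\tau}$. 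This follows from the homomorphism property of $\sigma\mapsto\sigma_{\mc{A}^\tau}$: the right-hand side equals $(\sigma^{-1}\eta^{-1})_{\mc{A}^\tau}=\sigma^{-1}_{\mc{A}^\tau}\circ\eta^{-1}_{\mc{A}^\tau}$.

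\emph{Cross-check and main obstacle.} As a consistency check, one can redo Step 2 with the formula from Step 1, which turns it into the cocycle condition \eqref{cocycle condition}. This uses the commutation $\sigma_{\mc{A}}\circ\gamma\circ\sigma_{\mc{A}}^{-1}=\theta_m(\sigma)(\gamma)$. The main obstacle is bookkeeping of conventions. One must confirm that $\sigma\mapsto\sigma_{\mc{A}}$ is a homomorphism rather than an anti-homomorphism, given that $\sigma_{\mc{A}}$ is induced by $\sigma^{-1}$ (base change along $\Spec\sigma^{-1}$, where $\Spec$ is contravariant, makes it covariant in $\sigma$). If the convention turns out reversed, the same computation yields $\delta_{\eta\sigma}$ with the factors swapped appropriately, and Step 1 must then be adjusted consistently.
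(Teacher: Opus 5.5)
Your proof is correct. Step 1 is exactly the paper's computation, but for the composition law you take a genuinely different route.

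The paper proves $\delta_\eta\circ\delta_\sigma=\delta_{\eta\sigma}$ on the untwisted side. It composes the explicit operators $(\sigma^{-1}_{\mc{A}})^*(\ol{\rho}(\sigma^{-1}))^*$ and moves $\sigma^{-1}_{\mc{A}}$ past $\ol{\rho}(\eta^{-1})$ using $\theta_m(\sigma^{-1})(\gamma)=\sigma^{-1}_{\mc{A}}\circ\gamma\circ\sigma_{\mc{A}}$. It then collapses the product with the cocycle condition \eqref{cocycle condition}. This is precisely what you relegated to a cross-check.

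You instead stay on the twisted side. Your $\delta_\sigma$ is the transport under $\lambda^*$ of the ordinary $\sigma$-semilinear action on $W^{\tau}\otimes_E E_{m,\rho}$. So the composition law reduces to $\sigma\mapsto\sigma_{\mc{A}^{\tau}}$ being a homomorphism. Strict functoriality of pullback on sections holds because $\mc{A}^{\tau}$ and $\mc{M}^{\tau}$ are defined over $E$, which makes the identifications $\sigma_{\mc{A}^{\tau}}^*\mc{M}^{\tau}_{E_{m,\rho}}=\mc{M}^{\tau}_{E_{m,\rho}}$ canonical and compatible.

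Your route is shorter and needs neither $\theta_m$ nor the cocycle condition; both are absorbed into the existence of the $E$-form $\mc{A}^{\tau}$ and \eqref{twist and conjugation}. The paper's route checks directly that the explicit operators $(\sigma^{-1}_{\mc{A}})^*(\ol{\rho}(\sigma^{-1}))^*$ form an action. These are the operators used in the sup-norm estimates of Lemma \ref{norms of twisted sections}. That argument would also work if one took the formula as the definition of the twisted action, without reference to $\mc{A}^{\tau}$.

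Your closing hedge is unnecessary. The convention is fixed, as you show, by the contravariance of $\Spec$, and this agrees with the paper's assertion that $\theta_m$ is a homomorphism. Under the opposite convention the identity would actually read $\delta_\eta\circ\delta_\sigma=\delta_{\sigma\eta}$, which is not the statement, so "swapping factors appropriately" would not rescue it.
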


\begin{proof}
An equation similar to \eqref{twist and conjugation} holds for any $\sigma\in G$, so for any $s\in V$, 
$$\delta_{\sigma}(s)=(\lambda^{-1}\circ \sigma^{-1}_{\mc{A}^{\tau}}\circ \lambda)^*(s)=(\ol{\rho}(\sigma^{-1})\circ \sigma^{-1}_{\mc{A}})^*(s)=(\sigma^{-1}_{\mc{A}})^*(\ol{\rho}(\sigma^{-1}))^*(s).$$
By \eqref{cocycle condition}, we have
$$\ol{\rho}((\eta\sigma)^{-1})=\ol{\rho}(\sigma^{-1}\eta^{-1})=\ol{\rho}(\sigma^{-1})\cdot\theta_m(\sigma^{-1})(\ol{\rho}(\eta^{-1})).$$
By the definition of $\theta_m$, we have
$$\theta_m(\sigma^{-1})(\ol{\rho}(\eta^{-1}))=\sigma^{-1}_{\mc{A}}\circ \ol{\rho}(\eta^{-1})\circ \sigma_{\mc{A}}.$$
Therefore, for any $s\in V$,
\begin{align*}
\delta_{\eta}(\delta_{\sigma}(s))
&=(\eta^{-1}_{\mc{A}})^*(\ol{\rho}(\eta^{-1}))^*(\sigma^{-1}_{\mc{A}})^*(\ol{\rho}(\sigma^{-1}))^*(s)
\\&=(\ol{\rho}(\sigma^{-1})\sigma^{-1}_{\mc{A}}\ol{\rho}(\eta^{-1})\eta^{-1}_{\mc{A}})^*(s)
\\&=(\ol{\rho}(\sigma^{-1})\cdot \theta_m(\sigma^{-1})(\ol{\rho}(\eta^{-1}))\cdot\sigma_{\mc{A}}^{-1}\eta^{-1}_{\mc{A}})^*(s)
\\&=(\ol{\rho}((\eta\sigma)^{-1})\cdot(\eta\sigma)_{\mc{A}}^{-1})^*(s)
\\&=\delta_{\eta\sigma}(s).
\end{align*}
The lemma follows.
\end{proof} 

Define the twisted trace $\text{Tr}_\tau: V\to V$ by 
$$\text{Tr}_\tau(s)=\sum_{\sigma\in G}\delta_{\sigma}(s).$$

\begin{lemma}\label{twisted trace surjection}
The twisted trace $\text{Tr}_\tau$ maps $V$ onto $V^{G,tw}$.
\end{lemma}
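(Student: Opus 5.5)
Two things need to be shown: the image of $\text{Tr}_\tau$ lies in $V^{G,tw}$, and every element of $V^{G,tw}$ is hit. I will derive both from Lemma \ref{twisted trace composition} together with a Galois descent argument.

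For the inclusion, take $s\in V$ and $\eta\in G$. Since each $\delta_\sigma$ is additive (it is semilinear), Lemma \ref{twisted trace composition} gives
$$\delta_\eta(\text{Tr}_\tau(s))=\sum_{\sigma\in G}\delta_\eta\delta_\sigma(s)=\sum_{\sigma\in G}\delta_{\eta\sigma}(s)=\text{Tr}_\tau(s),$$
because $\sigma\mapsto\eta\sigma$ is a bijection of $G$. Hence $\text{Tr}_\tau(V)\subset V^{G,tw}$.

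For surjectivity, the first step is to record that $\delta_\sigma$ is $\sigma$-semilinear. This is because $(\sigma_{\mc{A}}^{-1})^*$ is $\sigma$-semilinear, as in the untwisted action, while $(\ol{\rho}(\sigma^{-1}))^*$ is pullback by an $E_{m,\rho}$-automorphism (a translation by a $p^m$-torsion point) and so is $E_{m,\rho}$-linear. Lemma \ref{twisted trace composition} then says that $\{\delta_\sigma\}$ is a semilinear action of $G=\Gal(E_{m,\rho}/E)$ on $V$.

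Next I will use the descent identification stated before the lemma: $V^{G,tw}\simeq W^\tau$ and $V\simeq W^\tau\otimes_E E_{m,\rho}$. This follows from flat base change of global sections together with $\lambda^*$. Alternatively, Speiser's lemma (Hilbert 90 for $\mathbf{GL}_n$) gives directly that $V^{G,tw}\otimes_E E_{m,\rho}\to V$ is an isomorphism.

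Now take $w\in V^{G,tw}$. By the normal basis theorem, or simply by Dedekind independence of characters, there is $c\in E_{m,\rho}$ with $\text{tr}_{E_{m,\rho}/E}(c)=\sum_\sigma\sigma(c)\neq0$; after rescaling, we may assume this trace equals $1$. Put $s:=cw$. Using semilinearity and $\delta_\sigma(w)=w$, we get
$$\text{Tr}_\tau(cw)=\sum_{\sigma}\sigma(c)\,\delta_\sigma(w)=\Big(\sum_\sigma\sigma(c)\Big)w=w.$$
So surjectivity reduces to the existence of an element of trace $1$, and this argument does not even need the descent isomorphism.

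The main point to check carefully is the semilinearity claim, in particular that $(\sigma_{\mc{A}}^{-1})^*$ acts on scalars by $\sigma$ rather than by $\sigma^{-1}$. This has to be consistent with the convention that $\sigma_{\mc{A}}$ is induced by $\sigma^{-1}$. If the convention turns out to give $\sigma^{-1}$, the computation is unchanged: the trace of $c$ is still $\sum_\sigma\sigma^{-1}(c)=\text{tr}(c)$, so the argument goes through either way.
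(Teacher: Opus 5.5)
Your proof is correct and is essentially the paper's: the invariance computation is identical, and for surjectivity the paper takes the trace-one element $c=1/|G|\in\QQ$, which gives $\text{Tr}_\tau(u/|G|)=u$ directly. In characteristic zero this choice makes Dedekind independence and the $\sigma$ versus $\sigma^{-1}$ semilinearity check unnecessary, since the argument only uses that each $\delta_\sigma$ is $E$-linear (indeed additive).
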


\begin{proof}
For any $\eta\in G$, Lemma \ref{twisted trace composition} gives
$$\delta_{\eta}(\text{Tr}_\tau(s))=\sum_{\sigma\in G}\delta_{\eta}\delta_{\sigma}(s)=\sum_{\sigma\in G}\delta_{\eta\sigma}(s)=\sum_{\kappa\in G}\delta_{\kappa}(s)=\text{Tr}_\tau(s),$$
so $\text{Tr}_\tau(s)\in V^{G,tw}$.
If $u\in V^{G,tw}$, then 
$$\text{Tr}_\tau\left(\frac{u}{|G|}\right)=\sum_{\sigma\in G}\delta_{\sigma}\left(\frac{u}{|G|}\right)=\sum_{\sigma\in G}\frac{u}{|G|}=u.$$
Therefore, $\text{Tr}_\tau$ is surjective.
\end{proof}

We replace $|D_K|^{1/2}$ in a standard result in the geometry of numbers by $|D_K|^{1/n}$:

\begin{lemma}\label{Minkowski bound}
Let $K$ be a number field of degree $n=[K:\QQ]$ with discriminant $D_K$.
There exists a $\QQ$-basis $\{\gamma_1,\dots, \gamma_n\}$ for $K$ consisting of algebraic integers $\gamma_i\in \mc{O}_K$ such that $|\gamma_i|_v\leq |D_K|^{1/n}$ for all Archimedean places $v$ of $K$.
\end{lemma}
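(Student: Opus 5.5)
The plan is to work with the successive minima of the lattice $\mc{O}_K$ in the Minkowski space $K_\RR:=K\otimes_\QQ\RR\simeq \RR^{r_1}\times\CC^{r_2}$, measured by the sup-norm $\|x\|_\infty:=\max_{v\mid\infty}|x|_v$. This is exactly the $L^\infty$ setting of \cite[Theorem A.3.1]{dBPW25}, which refines \cite[Theorem 1.6]{BSTTTZ20}. Let $\lambda_1\le\cdots\le\lambda_n$ be the successive minima, attained by $\QQ$-linearly independent $x_1,\dots,x_n\in\mc{O}_K$ with $\|x_i\|_\infty=\lambda_i$. These $x_i$ form a $\QQ$-basis of $K$ consisting of algebraic integers, and they satisfy $|x_i|_v\le\lambda_n$ for every Archimedean $v$. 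So it suffices to prove $\lambda_n\le|D_K|^{1/n}$, and I take $\gamma_i:=x_i$.

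\textbf{Step 1 (normalizations).} Every nonzero $x\in\mc{O}_K$ has $|N_{K/\QQ}(x)|\ge1$, so $\|x\|_\infty\ge1$. Since $\|1\|_\infty=1$, we get $\lambda_1=1$ and may take $x_1=1$. Moreover all $\lambda_i\ge1$.

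\textbf{Step 2 (Minkowski's second theorem).} The sup-norm unit ball in $K_\RR$ has volume $2^{r_1}\pi^{r_2}$, and $\mc{O}_K$ has covolume $2^{-r_2}|D_K|^{1/2}$. Minkowski's second theorem therefore gives
$$\prod_{i=1}^n\lambda_i\le \frac{2^n\,2^{-r_2}|D_K|^{1/2}}{2^{r_1}\pi^{r_2}}=(2/\pi)^{r_2}|D_K|^{1/2}\le|D_K|^{1/2}.$$

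\textbf{Step 3 (multiplicativity: the key inequality).} I aim to show
$$\lambda_n\le\lambda_i\lambda_{n+1-i}\qquad\text{for every }1\le i\le n.$$
Granting this, multiplying over $i=1,\dots,n$ yields $\lambda_n^n\le\bigl(\prod_i\lambda_i\bigr)^2\le|D_K|$ by Step 2, which is the claim. To prove the inequality, set $V:=\mathrm{span}_\QQ(x_1,\dots,x_i)$ and $W:=\mathrm{span}_\QQ(x_1,\dots,x_{n+1-i})$. Every product $x_ax_b$ with $a\le i$ and $b\le n+1-i$ lies in $\mc{O}_K$ and satisfies $\|x_ax_b\|_\infty\le\lambda_i\lambda_{n+1-i}$. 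It is therefore enough to show that these products span $K$; then one of them lies outside any fixed hyperplane, in particular outside $\mathrm{span}(x_1,\dots,x_{n-1})$, and this forces $\lambda_n\le\lambda_i\lambda_{n+1-i}$. Since $1\in V\cap W$, the product space $VW$ contains $V+W$. I would then invoke the linear Kneser theorem for field extensions (Hou--Leung--Xiang), which gives $\dim VW\ge\dim V+\dim W-\dim H$, where $H$ is the stabilizer subfield of $VW$. If $H=\QQ$, this bound is $n$, so $VW=K$. If instead $H$ is a proper nontrivial subfield, I would run the same argument relative to $H$: $VW$ is an $H$-subspace, and replacing $V$ and $W$ by $HV$ and $HW$ (which does not change $VW$) one counts $H$-dimensions and argues inductively.

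\textbf{Main obstacle.} The genuinely delicate point is Step 3 when $H\neq\QQ$, i.e.\ when $VW$ is a proper $H$-subspace. I expect this to need the full argument of \cite[Theorem 1.6]{BSTTTZ20} in its $L^\infty$ form \cite[Theorem A.3.1]{dBPW25}, which I am allowed to quote directly. If the self-contained Kneser route becomes messy, I will simply cite that theorem for $\lambda_n\le|D_K|^{1/n}$ with the sup-norm and conclude from the reduction in the first paragraph.
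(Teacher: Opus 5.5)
Your reduction to $\lambda_n\le|D_K|^{1/n}$ for the sup-norm successive minima of $\mc{O}_K\subset K_{\RR}$ is exactly what the paper does. The paper then simply quotes \cite[Theorem A.3.1]{dBPW25} for that bound, so your fallback is the paper's proof.

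Your Steps 1--3 instead prove the bound directly. This is a genuine self-contained alternative, and it is sound:
\begin{itemize}
\item In Step 2 the normalizations are consistent: Lebesgue measure on each $\CC$-factor gives covolume $2^{-r_2}|D_K|^{1/2}$ and $\mathrm{vol}(X)=2^{r_1}\pi^{r_2}$, hence $\prod_i\lambda_i\le(2/\pi)^{r_2}|D_K|^{1/2}$.
\item In Step 3, any lattice vector of sup-norm $<\lambda_n$ lies in $\mathrm{span}(x_1,\dots,x_{n-1})$. So if the products span $K$, you do get $\lambda_n\le\lambda_i\lambda_{n+1-i}$.
\end{itemize}

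The ``main obstacle'' you flag is not real, and you need neither Kneser's theorem nor an induction on the stabilizer. Suppose the products $x_ax_b$ ($a\le i$, $b\le n+1-i$) lay in a proper subspace of $K$. Choose a nonzero $\QQ$-linear functional vanishing on that subspace. By nondegeneracy of the trace form it equals $x\mapsto\mathrm{Tr}_{K/\QQ}(\beta x)$ for some $\beta\in K^\times$. Then $\beta V$ is contained in the trace-orthogonal complement of $W$, which has dimension $n-\dim W$. Since multiplication by $\beta$ is injective, $\dim V+\dim W\le n$, contradicting $\dim V+\dim W=n+1$.

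If you prefer to stay with Hou--Leung--Xiang, the finish is also one line. If $VW\neq K$ and its stabilizer $H$ has degree $h$, then $VW$ is a proper $H$-subspace, so $\dim_{\QQ}VW\le n-h$. This contradicts the Kneser bound $\dim_{\QQ}VW\ge n+1-h$.

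With this fix, your route is a complete elementary proof of the lemma, independent of the appendix of \cite{dBPW25}. It also makes transparent where the exponent $1/n$ comes from: squaring Minkowski's second theorem via $\lambda_n^n\le(\prod_i\lambda_i)^2$. The paper's citation buys brevity.
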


\begin{proof}
The field $K$ embeds into 
$$K_{\RR}:=\left\{(x_v)\in \prod_{v:K\hookrightarrow\CC} \CC: \ol{x_v}=x_{\ol{v}}\right\}$$
via its Archimedean embeddings.
The volume of the fundamental domain of $\mc{O}_K$ as a lattice is $\sqrt{|D_K|}$, see \cite[Prop. 5.2]{Neu99}.
Let $\lambda_i$ be the $i$-th successive minimum of the centrally symmetric convex set
$$X:=\{(x_v)\in K_{\RR}: |x_v|\leq 1 \text{ for all } v\}$$
with respect to $\mc{O}_K$,
i.e. $\lambda_i$ is the minimum of the real numbers $\lambda>0$ such that $\lambda X$ contains at least $i$ linearly independent vectors of $\mc{O}_K$.
By \cite[Theorem A.3.1]{dBPW25} ($L^{\infty}$ version of \cite[Theorem 1.6]{BSTTTZ20}), we have $\lambda_n\leq |D_K|^{1/n}$.
The lemma follows.
\end{proof}

Fix a basis $\mf{B}:=\{b_0,\dots, b_N\}$ of global sections of $\mc{K}^{\otimes 3}$.
By Koizumi's theorem \cite{Koi76}, the multiplication map
\begin{equation}\label{Koizumi}
H^0(\mc{A}, \mc{K}^{\otimes 3})^{\otimes p^{2m}}\ra H^0(\mc{A}, \mc{K}^{\otimes 3p^{2m}})
\end{equation}
is surjective, so we can construct a basis $\mf{A}:=\{a_0,\dots, a_{M(m)}\}$ of $H^0(\mc{A}, \mc{K}^{\otimes 3p^{2m}})$ such that each $a_i$ is some degree $p^{2m}$ monomial in $b_0,\dots, b_N$. 
The isomorphism $f_m^*\mc{K}^{\otimes 3}\simeq \mc{K}^{\otimes 3p^{2m}}$ induces an isomorphism
$$\Psi: H^0(\mc{A}, \mc{K}^{\otimes 3p^{2m}})\simeq H^0(\mc{A}, f_m^*\mc{K}^{\otimes 3}).$$
We have a basis $\mf{U}:=\{u_0,\dots, u_{M(m)}\}$ for $W:=H^0(\mc{A}, f_m^*\mc{K}^{\otimes 3})$, 
where $u_i=\Psi(a_i)$.

By Lemma \ref{Minkowski bound}, there exists a $\QQ$-basis $\{\alpha_1,\dots, \alpha_d\}$ for $E_{m,\rho}$ consisting of algebraic integers $\alpha_i\in \mc{O}_{E_{m,\rho}}$ such that $|\alpha_i|_v\leq |D_{E_{m,\rho}}|^{1/d}$ for all Archimedean places $v$ of $E_{m,\rho}$.
The set
$$\{\alpha_au_i: 1\leq a\leq d, ~0\leq i\leq M(m)\}$$
spans $V=W_{E_{m,\rho}}$ over $E$.
By Lemma \ref{twisted trace surjection}, the image of this set under the twisted trace $\text{Tr}_{\tau}$ spans $V^{G,tw}$ over $E$. 
Since $\lambda^*(W^{\tau})=V^{G,tw}$, we have 
$$\dim V^{G,tw}=\dim W^{\tau}=\dim W^{\tau}_{E_{m,\rho}}=\dim V=\dim W =M(m)+1.$$
There exists $v_j:=\text{Tr}_{\tau}(\alpha_{a_j}u_{i_j})$, where $j=0,\cdots, M(m)$, such that $\{v_0,\dots v_{M(m)}\}$ forms a basis for $V^{G,tw}$ over $E$.
Let $t_i:= (\lambda^*)^{-1}(v_i)$.
Then $\{t_0,\dots, t_{M(m)}\}$ forms a basis for $W^{\tau}$ over $E$.

\subsection{Projective embeddings and a height identity}\label{Projective embeddings and a height identity}
Let $\iota^{\tau}_E: \mc{A}^{\tau}\hookrightarrow \mb{P}^{M(m)}_{E}$ be the embedding defined by the basis $\{t_0,\dots, t_{M(m)}\}$ in $W^{\tau}:= H^0(\mc{A}^{\tau},\mc{M}^{\tau})$.
Let $\ms{A}^{\tau}_{\mc{O}_E}$ be the Zariski closure of the image of $\mc{A}^{\tau}$ in $\mb{P}^{M(m)}_{\mc{O}_E}$.
Let $\iota^{\tau}: \ms{A}^{\tau}_{\mc{O}_E}\hookrightarrow \mb{P}^{M(m)}_{\mc{O}_E} $ be the embedding.
Let $\ol{\ms{M}^{\tau}}$ be the Hermitian line bundle on $\ms{A}^{\tau}_{\mc{O}_E}$ defined as $\iota^{\tau*}\ol{\mc{O}(1)}$ equipped with the Fubini-Study metrics.
For any line bundle $\mc{N}$ on $\mc{A}$, denote by $\widehat{h}_{\mc{N}}$ the N\'{e}ron-Tate height on $\mc{A}$ with respect to $\mc{N}$.
Let $h_{\ol{\ms{M}^{\tau}}}$ be the logarithmic Arakelov height on $\mc{A}^{\tau}$ with respect to $\ol{\ms{M}^{\tau}}$.

Let $e: \Spec E\to \mc{A}$ be the zero section. 
A rigidification of a line bundle $\mc{N}$ on $\mc{A}$ is an $E$-linear isomorphism $\kappa: E\simeq e^*\mc{N}$.
We rigidify $\mc{K}$ by choosing a non-zero element $z$ in the fiber of $\mc{K}$ above the origin and define $\kappa: E\simeq e^* \mc{K}$ by sending $1$ to $z$.
For every embedding $w: E\hookrightarrow \CC$, let $\mc{A}_w$ and $\mc{K}_w$ be the base changes of $\mc{A}$ and $\mc{K}$ to $\CC$ via $w$ respectively.
For every non-Archimedean place $w$ of $E$, we fix a completion $\widehat{\ol{E}_w}$ of $\ol{E}_w$, and set $\mc{A}_w=\mc{A}_{\widehat{\ol{E}_w}}$ and $\mc{K}_w=\mc{K}_{\widehat{\ol{E}_w}}$.
For each place $w$ of $E$, the analytification $\mc{K}_w^{an}$ is equipped with the canonical metric $\|\cdot\|_{\mc{K},w}$ in the sense of Zhang \cite{Zha95}, and de Jong and Shokrieh \cite[Section 5]{dJS22}.
It satisfies the property that $[n]^*\mc{K}\simeq \mc{K}^{\otimes n^2}$ is an isometry with respect to the canonically induced metrics on $[n]^*\mc{K}$ and $\mc{K}^{\otimes n^2}$ for all positive integers $n$, see \emph{loc. cit}.
Let $s$ be a non-zero rational section of $\mc{K}$ such that $P\in \mc{A}(E)$ does not lie on the support of $\text{div}_{\mc{K}}(s)$.
Write $n_w:=[E_w: \QQ_{w_0}]$, where $w_0$ is the place below a place $w$.
By \cite[(14.4)]{dJS22}, we have 
\begin{equation}\label{NT height and canonical metrics}
\widehat{h}_{\mc{K}}(P)=-\frac{1}{[E:\QQ]}\sum_{w\in M_E}n_w\log \|s(P)\|_{\mc{K},w}.
\end{equation}
We equip $\mc{M}^{\tau}:=(f_m^{\tau*}\mc{K})^{\otimes 3}$ with the metrics induced from that on $\mc{K}$.
These are indeed the canonical metrics on $\mc{M}^{\tau}$ since $[n]^*\mc{K}\simeq \mc{K}^{\otimes n^2}$ is an isometry and since we have the isomorphism $\lambda: \mc{A}_{E_{m,\rho}}\simeq \mc{A}^{\tau}_{E_{m,\rho}}$ with $(f_m^{\tau})_{E_{m,\rho}}\circ\lambda=(f_m)_{E_{m,\rho}}$.

\begin{lemma}\label{Arakelov and NT height identity}
Let $P\in \mc{A}^{\tau}(E_{m,\rho})$. Then
\begin{align*}
h_{\ol{\ms{M}^{\tau}}}(P)
= \widehat{h}_{\mc{M}^{\tau}_{E_{m,\rho}}}(P)
&+\frac{1}{[E_{m,\rho}:\QQ]}\sum_{v\nmid\infty}n_v\log \max_{0\leq i\leq M(m)}\|t_i(P)\|_{\mc{M}^{\tau}_{E_{m,\rho}},v}
\\&\quad+\frac{1}{[E_{m,\rho}:\QQ]}\sum_{v\mid\infty} n_v\log\left(\sum_{i=0}^{M(m)}\|t_i(P)\|^2_{\mc{M}^{\tau}_{E_{m,\rho}},v}\right)^{1/2}.
\end{align*}
\end{lemma}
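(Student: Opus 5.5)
We unwind both sides at the point $P$, comparing the Fubini--Study metric on $\iota^{\tau*}\mathcal{O}(1)$ with the canonical metric on $\mathcal{M}^{\tau}$ via a single section. The strategy is to write both heights as sums of local terms evaluated at one auxiliary section $s$. Once written that way, the difference of the two sums is exactly the stated correction terms.

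\textbf{Step 1: the Arakelov height in coordinates.} Since the Arakelov height is invariant under base change, we may compute $h_{\overline{\mathscr{M}^{\tau}}}(P)$ over $E_{m,\rho}$. Write $\iota^{\tau}(P)=[t_0(P):\cdots:t_{M(m)}(P)]$, meaning: after choosing a local trivialization of $\mathcal{M}^{\tau}$ at $P$, the coordinates are the values of the $t_i$. More intrinsically, pick a section $s\in\{t_0,\dots,t_{M(m)}\}$ with $s(P)\neq 0$. Then the homogeneous coordinates are $x_i=t_i(P)/s(P)\in E_{m,\rho}$. The explicit formula for $h_{\overline{\mathcal{O}(1)}}$ from Section 2 then expresses $h_{\overline{\mathscr{M}^{\tau}}}(P)$ as
\[
\frac{1}{[E_{m,\rho}:\mathbb{Q}]}\Big(\sum_{v\nmid\infty}n_v\log\max_i|x_i|_v+\sum_{v\mid\infty}n_v\log\big(\textstyle\sum_i|x_i|_v^2\big)^{1/2}\Big).
\]
The fact that this formula computes the Arakelov height of the Zariski closure model is the standard identification via $\varepsilon_P^*\overline{\mathcal{O}(1)}$, which the paper recalled in Section 2.

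\textbf{Step 2: insert the canonical metric.} For each place $v$, write
\[
|x_i|_v=\frac{\|t_i(P)\|_{\mathcal{M}^{\tau},v}}{\|s(P)\|_{\mathcal{M}^{\tau},v}}.
\]
This is legitimate because $x_i$ is the ratio of two sections of the same line bundle evaluated at $P$. Any metric on the fiber gives the same ratio, since the norm is homogeneous with respect to $|\cdot|_v$. Factoring out $\|s(P)\|_v$ splits each local term into two parts:
\begin{itemize}
\item the local term with the $\|t_i(P)\|_v$ in place of the $|x_i|_v$, which is the right-hand correction;
\item the extra term $-\log\|s(P)\|_{\mathcal{M}^{\tau},v}$.
\end{itemize}
Summing the extra terms with weights $n_v/[E_{m,\rho}:\mathbb{Q}]$ gives $\widehat{h}_{\mathcal{M}^{\tau}_{E_{m,\rho}}}(P)$ by \eqref{NT height and canonical metrics}. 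Here \eqref{NT height and canonical metrics} is applied on the abelian variety $\mathcal{A}^{\tau}_{E_{m,\rho}}$ with the rigidified line bundle $\mathcal{M}^{\tau}$ equipped with its canonical metrics. The hypothesis that $P$ is off the divisor of $s$ holds by our choice $s(P)\neq 0$.

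\textbf{Main obstacle.} The step needing care is justifying that \eqref{NT height and canonical metrics} applies to $\mathcal{M}^{\tau}$ on the twist. Three points must be checked:
\begin{itemize}
\item $\mathcal{A}^{\tau}_{E_{m,\rho}}\cong\mathcal{A}_{E_{m,\rho}}$ is an abelian variety via $\lambda$, with origin $\lambda(0)$;
\item $\mathcal{M}^{\tau}$ pulls back under $\lambda$ to $(f_m^*\mathcal{K})^{\otimes3}\simeq\mathcal{K}^{\otimes 3p^{2m}}$, which is symmetric (or at least admits a canonical metric);
\item the metrics induced from $\mathcal{K}$ are the canonical ones, as asserted just before the lemma.
\end{itemize}
Since \eqref{NT height and canonical metrics} is transported along $\lambda$ and the N\'eron--Tate height is functorial, it should hold for $\mathcal{M}^{\tau}$ once the rigidification is pulled back compatibly. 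The choice of rigidification does not affect the sum, by the product formula. Finally, the result is independent of the choice of $s$, again by the product formula.
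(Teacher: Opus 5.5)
Your proposal is correct and is essentially the paper's proof: choose $s$ with $s(P)\neq 0$, write $t_i(P)=x_i s(P)$ so that $\iota^{\tau}_E(P)=[x_0:\cdots:x_{M(m)}]$, and substitute $|x_i|_v=\|t_i(P)\|_v/\|s(P)\|_v$ into the Fubini--Study height formula. The leftover sum $-\frac{1}{[E_{m,\rho}:\QQ]}\sum_v n_v\log\|s(P)\|_v$ is then identified with $\widehat{h}_{\mc{M}^{\tau}_{E_{m,\rho}}}(P)$ via \eqref{NT height and canonical metrics} applied over $E_{m,\rho}$; your remarks on why that formula applies on the twist (transport along $\lambda$, canonicity of the induced metrics) make explicit what the paper asserts in the paragraph before the lemma.
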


\begin{proof}
Choose a non-zero rational section $s$ of $\mc{M}^{\tau}_{E_{m,\rho}}$ such that $s(P)\neq 0$.
For every $0\leq i\leq M(m)$, there is a unique $x_i\in E_{m,\rho}$ such that $t_i(P)=x_is(P)$.
Since $t_0,\dots, t_{M(m)}$ have no common zeros, $x_i$ is non-zero for some $i$, and $\iota^{\tau}_E(P)=[x_0,\dots, x_{M(m)}]$.
By the definition of logarithmic Arakelov height, 
\begin{align*}
h_{\ol{\ms{M}^{\tau}}}(P)
&=h_{\ol{\mc{O}(1)}}([x_0,\dots, x_{M(m)}])
\\&=\frac{1}{[E_{m,\rho}:\QQ]}\left(\sum_{v\nmid \infty } n_v\log \max_{0\leq i\leq {M(m)}}|x_i|_v+\sum_{v\mid\infty} n_v\log \left(\sum_{i=0}^{M(m)}|x_i|_v^2\right)^{1/2}\right)
\\&= -\frac{1}{[E_{m,\rho}:\QQ]}\sum_{v\in M_{E_{m,\rho}}}n_v \log \|s(P)\|_{\mc{M}^{\tau}_{E_{m,\rho}},v}
\\&\quad\quad+\frac{1}{[E_{m,\rho}:\QQ]}\sum_{v\nmid \infty } n_v\log \max_{0\leq i\leq {M(m)}}\|t_i(P)\|_{\mc{M}^{\tau}_{E_{m,\rho}},v}
\\&\quad\quad+\frac{1}{[E_{m,\rho}:\QQ]}\sum_{v\mid\infty} n_v\log \left(\sum_{i=0}^{M(m)}\|t_i(P)\|_{\mc{M}^{\tau}_{E_{m,\rho}},v}^2\right)^{1/2}.
\end{align*}
By applying \eqref{NT height and canonical metrics} to $\mc{M}^{\tau}_{E_{m,\rho}}$, the lemma follows.
\end{proof}

The classical comparison of N\'{e}ron-Tate height and Weil height on embedded abelian varieties goes back to Zarhin-Manin \cite{ZM72}, where the embedding in their work is defined using Mumford coordinates. 
Lemma \ref{Arakelov and NT height identity} above considers the basis $\{t_0,\dots, t_{M(m)}\}$ of global sections constructed in Section \ref{Basis of global sections of twisted line bundles} instead.

\subsection{Bounds for norms of sections}\label{Bounds for norms of sections}
For any place $w$ of $E$, rigidified line bundle $\mc{L}$, and section $s\in H^0(\mc{A}, \mc{L})$, we define 
$$\|s\|_{\sup,w}:= \sup_{x\in \mc{A}^{an}_w}\|s(x)\|_{\mc{L}, w},$$
where $\|\cdot\|_{\mc{L},w}$ is the canonical metric defined as in Section \ref{Projective embeddings and a height identity}.
Let $\mf{B}:=\{b_0,\dots, b_N\}$ be the basis for $ H^0(\mc{A}, \mc{K}^{\otimes 3})$ fixed in Section \ref{Basis of global sections of twisted line bundles}. 
Let 
$$C_w:=\max\{1, \max_{0\leq k\leq N}\|b_k\|_{\sup,w}\}.$$
Let $\iota_{\mc{A}}:\mc{A}\hookrightarrow \mb{P}^N_E$ be the embedding defined by the basis $\mf{B}$.
Let $\ms{A}$ be the Zariski closure of $\ms{A}_{\mathcal{O}_{E,S}}$ in $\mb{P}^N_{\mc{O}_E}$.
Let $\iota_{\ms{A}}:\ms{A}\hookrightarrow \mb{P}^N_{\mc{O}_E}$ be the embedding.
Let $\ol{\ms{N}}$ be the Hermitian line bundle on $\ms{A}$ defined as $\iota^*_{\ms{A}}\ol{\mc{O}(1)}$ equipped with the Fubini-Study metrics.

\begin{lemma}\label{finite log sup norm sum}
The quantity $\displaystyle c_{\mf{B}}:= \frac{1}{[E:\QQ]}\sum_{w\in M_E}n_w \log C_w$ is finite. 
\end{lemma}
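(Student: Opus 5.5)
The plan is to show that only finitely many terms in the sum are nonzero, and that each of those terms is finite. Since $C_w\geq 1$ by definition, every summand $n_w\log C_w$ is nonnegative. So it suffices to prove two things. First, $\|b_k\|_{\sup,w}<\infty$ for every place $w$ and every $k$. Second, $\|b_k\|_{\sup,w}\leq 1$ for all $k$ and all but finitely many non-Archimedean places $w$. Then $C_w=1$ for almost all $w$, and the sum is a finite sum of finite numbers.

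\emph{Archimedean places.} For each embedding $w:E\hookrightarrow\CC$, the canonical metric $\|\cdot\|_{\mc{K},w}$ is a continuous (indeed smooth) Hermitian metric on $\mc{K}_w^{an}$. The space $\mc{A}_w(\CC)$ is compact. Hence $x\mapsto\|b_k(x)\|_{\mc{K}^{\otimes 3},w}$ is a continuous function on a compact space and attains a finite maximum. There are only finitely many such $w$.

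\emph{Non-Archimedean places: finiteness.} For a non-Archimedean $w$, the canonical metric is continuous on the Berkovich analytification $\mc{A}_w^{an}$, which is compact because $\mc{A}$ is proper. This is the construction in \cite{Zha95} and \cite[Section 5]{dJS22}, where the metric is obtained as a uniform limit of model metrics by Tate's limiting argument. The norm $\|b_k(x)\|$ is a continuous function of $x$, so its supremum is finite.

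\emph{Non-Archimedean places: almost all $C_w$ equal $1$.} This is the step that needs care, and I expect it to be the main point of the proof. Enlarge $S$ to a finite set $S'$ with the following properties:
\begin{itemize}
\item $\mc{K}$ extends to a line bundle $\ms{K}$ on the abelian scheme $\ms{A}_{\mc{O}_{E,S'}}$;
\item the rigidification $z$ extends to a generator of $e^*\ms{K}$;
\item the isomorphism $[2]^*\ms{K}\simeq\ms{K}^{\otimes 4}$, compatible with the rigidifications, holds over $\mc{O}_{E,S'}$ (it exists because $\mc{K}$ is symmetric);
\item each $b_k$ extends to a section in $H^0(\ms{A}_{\mc{O}_{E,S'}},\ms{K}^{\otimes 3})$.
\end{itemize}
Spreading out over a finite set of primes makes all of this possible.

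For $w\notin S'$, the model metric defined by $\ms{K}$ is invariant under Tate's limiting process, since $[2]^*$ carries the model to the model. So the canonical metric at $w$ coincides with the model metric of $\ms{K}$; this is the standard fact recorded in \cite{Zha95} and \cite{dJS22}. With respect to a model metric, an integral section has norm at most $1$ at every point, because every point of $\mc{A}_w^{an}$ reduces to the special fiber by properness. Therefore $\|b_k\|_{\sup,w}\leq 1$, and so $C_w=1$ for all $w\notin S'$.

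The main obstacle is justifying that the canonical metric equals the good-reduction model metric outside a finite set of places. I would cite it from \cite[Section 5]{dJS22}, or verify it directly via the fixed-point characterization of the canonical metric under $[2]^*$.
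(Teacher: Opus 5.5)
Your proof is correct and follows the paper's route. At the finitely many remaining places, $C_w$ is finite by compactness of $\mc{A}^{an}_w$. At every other non-Archimedean place, $C_w=1$ because the canonical metric coincides with the good-reduction model metric, under which integral sections have norm at most $1$.

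The only difference is that you enlarge $S$ to $S'$ so that the rigidification $z$, the isomorphism $[2]^*\ms{K}\simeq\ms{K}^{\otimes 4}$ and the sections $b_k$ are all integral on the model. This makes explicit a compatibility that the paper takes for granted when it asserts $C_w=1$ for every $w\notin S$ via the closure $(\ms{A},\ms{N})$ in $\mb{P}^N_{\mc{O}_E}$. The enlargement is harmless, since finiteness of $c_{\mf{B}}$ only requires $C_w=1$ for almost all $w$.
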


\begin{proof}
The abelian variety $\mc{A}$ has good reduction outside $S$. 
If $w$ is a non-Archimedean place of $E$ outside $S$, 
then as mentioned in \cite[Section 5.1]{dJS22}, the canonical metric on $\mc{K}^{\otimes 3}_w$ is equal to the model metric $\|\cdot \|_{\ms{N}_{\mc{O}_{E_w}}}$ induced by $(\ms{A}_{\mc{O}_{E_w}},\ms{N}_{\mc{O}_{E_w}})$, which is characterized in \cite[Section 4.1]{dJS22} by the following property. 
Let $s$ be a non-zero rational section of $\mc{K}_w^{\otimes 3}$ viewed as a rational section of $\ms{N}_{\mc{O}_{E_w}}$.
Let $x\in \mc{A}_w^{an}$.
Let $\xi$ be the reduction of $x$ on the special fiber of the model $\ms{A}_{\mc{O}_{E_w}}$.
Let $U$ be an open neighborhood of $\xi$ in $\ms{A}_{\mc{O}_{E_w}}$ such that $\ms{N}_{\mc{O}_{E_w}}$ is trivialized on $U$. 
Let $t$ be a trivializing element of $\ms{N}_{\mc{O}_{E_w}}(U)$.
Let $f$ be the unique rational function on $U$ satisfying $s=f\cdot t$ on $U$.
Then we set $\|s(x)\|_{\ms{N}_{\mc{O}_{E_w}}}=|f(x)|$, which is at most $1$ since $f(x)\in \mc{O}_{E_w}$.
Hence, $C_w=1$ for all $w\not\in S$.
At each of the remaining places, $C_w$ is finite because $\mc{A}^{an}_w$ is compact.
The lemma follows.
\end{proof}

\begin{lemma}\label{norms of twisted sections}
The basis $\mf{A}:=\{a_0,\dots, a_{M(m)}\}$ for $H^0(\mc{A}, \mc{K}^{\otimes 3p^{2m}})$ and the basis $\mf{U}:=\{u_0,\dots, u_{M(m)}\}$ for $W:=H^0(\mc{A}, f_m^*\mc{K}^{\otimes 3})$ defined in Section \ref{Basis of global sections of twisted line bundles} satisfy $\|a_i\|_{\sup,w}\leq C_w^{p^{2m}}$ and $\|u_i\|_{\sup,w}\leq C_w^{p^{2m}}$ for any place $w$ of $E$ and any $i=0,\dots, M(m)$.
For any $\delta_{\sigma}$ as in Section \ref{Basis of global sections of twisted line bundles}, where $\sigma\in \Gal(E_{m,\rho}/E)$, any $\alpha\in E_{m,\rho}$, any $i=0,\dots, M(m)$, and any place $v$ of $E_{m,\rho}$,
$$\|\delta_{\sigma}(\alpha u_i)\|_{\sup, v}\leq |\sigma(\alpha)|_v C^{p^{2m}}_{w(v)},$$
where $w(v)$ is the place of $E$ below $v$.
\end{lemma}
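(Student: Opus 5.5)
The proof has three steps, one for each bound in Lemma \ref{norms of twisted sections}. Throughout I use two properties of the canonical metrics. First, they are multiplicative under tensor products, since the metric on $\mc{K}^{\otimes 3p^{2m}}$ is induced from that on $\mc{K}$. Second, pullback by $[p^m]$ and by translations by torsion points are isometries, because canonical metrics are determined by their compatibility with $[n]^*\mc{K}\simeq \mc{K}^{\otimes n^2}$ together with the rigidification.

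\textbf{Bound for $a_i$.} Each $a_i$ is a monomial $b_{k_1}\cdots b_{k_{p^{2m}}}$ of degree $p^{2m}$. For every $x\in\mc{A}^{an}_w$,
$$\|a_i(x)\|=\prod_j\|b_{k_j}(x)\|\leq \prod_j \|b_{k_j}\|_{\sup,w}\leq C_w^{p^{2m}}.$$
Taking the supremum over $x$ gives $\|a_i\|_{\sup,w}\leq C_w^{p^{2m}}$.

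\textbf{Bound for $u_i$.} Here $u_i=\Psi(a_i)$, where $\Psi$ is induced by the isomorphism $f_m^*\mc{K}^{\otimes 3}\simeq \mc{K}^{\otimes 3p^{2m}}$. The canonical metric on $f_m^*\mc{K}^{\otimes 3}$ is the one induced from $\mc{K}$, and the canonical property says $[p^m]^*\mc{K}\simeq\mc{K}^{\otimes p^{2m}}$ is an isometry. So $\Psi$ is pointwise norm-preserving, and $\|u_i\|_{\sup,w}=\|a_i\|_{\sup,w}\leq C_w^{p^{2m}}$.

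One point needs checking. For a symmetric $\mc{K}$ the isomorphism is only canonical up to a scalar, fixed by the rigidification, and that scalar has absolute value $1$ at every place once both sides are rigidified compatibly. I expect this to be the only delicate point, and I will handle it by citing the isometry statement of \cite[Section 5]{dJS22} for rigidified bundles.

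\textbf{Bound for $\delta_\sigma(\alpha u_i)$.} By Lemma \ref{twisted trace composition},
$$\delta_\sigma(\alpha u_i)=(\sigma^{-1}_{\mc{A}})^*\bigl(\ol{\rho}(\sigma^{-1})^*(\alpha u_i)\bigr).$$
The element $\ol{\rho}(\sigma^{-1})\in G_m$ is a translation $t_x$ by a $p^m$-torsion point. Translation by a torsion point preserves $f_m^*\mc{K}$, since $f_m\circ t_x=f_m$, so $t_x^*$ acts on $f_m^*\mc{K}^{\otimes 3}$ compatibly with the pulled-back metric. It is therefore an isometry of $\mc{A}^{an}_v$ onto itself, and $\|t_x^*(\alpha u_i)\|_{\sup,v}=|\alpha|_v\|u_i\|_{\sup,w(v)}$.

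Next, $(\sigma^{-1}_{\mc{A}})^*$ is the semilinear Galois action. It sends $\alpha s$ to $\sigma(\alpha)\,\sigma\cdot s$ and permutes points of $\mc{A}^{an}$ over the completion. Because $\mc{M}$ and its canonical metric are defined over $E$, and the sup norm at $v$ is computed on $\mc{A}_{w(v)}^{an}$, this gives $\|\sigma\cdot s\|_{\sup,v}=\|s\|_{\sup,\sigma^{-1}v}$, with the coefficient becoming $\sigma(\alpha)$ evaluated at $v$. Combining the two isometries yields
$$\|\delta_\sigma(\alpha u_i)\|_{\sup,v}\leq |\sigma(\alpha)|_v\, C_{w(v)}^{p^{2m}}.$$

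Applying the semilinear action first and the translation second lets the $\alpha$-factor be tracked cleanly, so that it appears as $|\sigma(\alpha)|_v$. The main bookkeeping obstacle is matching places: I must make sure the Galois twist changes the place at which $\alpha$ is evaluated while leaving the bound $C_{w(v)}$ unchanged. This works because $\sigma$ fixes $E$, so $w(\sigma^{-1}v)=w(v)$.
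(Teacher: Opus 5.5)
Your proposal is correct and follows the paper's proof essentially step for step. The three steps match: multiplicativity of the canonical metric gives the bound on the monomials $a_i$; the isometry property of $\Psi$ transfers it to $u_i$; and for $\delta_\sigma$ you use the factorization from Lemma \ref{twisted trace composition} into an isometric translation pullback (since $[p^m]\circ\ol{\rho}(\sigma^{-1})=[p^m]$) followed by a semilinear Galois pullback that permutes the places above $w(v)$, with your extra remark on normalizing $\Psi$ via the rigidification making explicit a point the paper leaves implicit.
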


\begin{proof}
Since $a_i$ is a product of $p^{2m}$ sections among $b_k$, we have $\|a_i\|_{\sup,w}\leq C_w^{p^{2m}}$.
By the property of canonical metric, $\Psi$ defined in Section \ref{Basis of global sections of twisted line bundles} is an isometry.
Since $u_i=\Psi(a_i)$, we have $\|u_i\|_{\sup,w}=\|a_i\|_{\sup,w}\leq C_w^{p^{2m}}$.

By Lemma \ref{twisted trace composition}, we have 
$\delta_{\sigma}=(\sigma^{-1}_{\mc{A}})^*(\ol{\rho}(\sigma^{-1}))^*$.
Let $\ol{\mc{M}}_{\text{can}}$ and $\ol{\mc{K}}^{\otimes 3}_{\text{can}}
$ be the line bundles $\mc{M}$ and $\mc{K}^{\otimes 3}$ equipped with the canonical metrics respectively.
Since $[p^m]\circ \ol{\rho}(\sigma^{-1})=[p^m]$, we have 
$$\ol{\rho}(\sigma^{-1})^*\ol{\mc{M}}_{\text{can}}=\ol{\rho}(\sigma^{-1})^*[p^m]^*\ol{\mc{K}}^{\otimes 3}_{\text{can}}=[p^m]^*\ol{\mc{K}}^{\otimes 3}_{\text{can}}=\ol{\mc{M}}_{\text{can}}.$$
Hence $\ol{\rho}(\sigma^{-1})^*$ acts isometrically.
Pullback under $\sigma^{-1}_{\mc{A}}$ of the canonical metric on $\mc{M}_{(E_{m,\rho})_{\sigma^{-1}v}}$ is the canonical metric on $\mc{M}_{(E_{m,\rho})_{v}}$.
Therefore, $\|\delta_{\sigma}(u_i)\|_{\sup, v}=\|u_i\|_{\sup,\sigma (v)}=\|u_i\|_{\sup, w(v)}\leq C_{w(v)}^{p^{2m}}$.
By semilinearity, we have $\|\delta_{\sigma}(\alpha u_i)\|_{\sup, v}=|\sigma(\alpha)|_v\|\delta_{\sigma}(u_i)\|_{\sup,v}\leq |\sigma(\alpha)|_vC_{w(v)}^{p^{2m}}$.
\end{proof}

We emphasize that we are able to take the $1/[E_{m,\rho}:\QQ]$-th root of $|D_{E_{m,\rho}}|$ in the following lemma because of the result by de Boer \emph{et al.} \cite[Theorem A.3.1]{dBPW25} (an $L^{\infty}$ version of Bhargava \emph{et al.} \cite[Theorem 1.6]{BSTTTZ20}).

\begin{lemma}\label{norm bound for twisted sections}
Let $\{v_0,\dots, v_{M(m)}\}$ and $\{t_0,\dots, t_{M(m)}\}$ be as in Section \ref{Basis of global sections of twisted line bundles}.
Let $v$ be a place of $E_{m,\rho}$ and let $w(v)$ be the place of $E$ below $v$.
Let $D_{E_{m,\rho}}$ be the discriminant of $E_{m,\rho}$.
For any $Q\in \mc{A}(E_{m,\rho})$, and any $j=0,\dots, M(m)$,
\begin{equation}
\|t_j(\lambda(Q))\|_{\mc{M}^{\tau}_{E_{m,\rho}},v}\leq \|v_j\|_{\sup,v}\leq 
\begin{cases}
C^{p^{2m}}_{w(v)},
&  v\nmid \infty;
\\ [E_{m,\rho}:E] |D_{E_{m,\rho}}|^{1/[E_{m,\rho}:\QQ]}C^{p^{2m}}_{w(v)},
&v\mid \infty.
\end{cases}
\end{equation}
\end{lemma}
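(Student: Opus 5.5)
The first inequality comes from how $t_j$ is defined, and the second from expanding $v_j$ as a twisted trace. For the first, recall $t_j=(\lambda^*)^{-1}(v_j)$, so $\lambda^*t_j=v_j$ as a section of $\mc{M}_{E_{m,\rho}}$. The canonical metric on $\mc{M}^{\tau}_{E_{m,\rho}}$ pulls back under $\lambda$ to the canonical metric on $\mc{M}_{E_{m,\rho}}$; this was noted when $\mc{M}^{\tau}$ was equipped with the metric induced from $\mc{K}$, using $(f_m^{\tau})_{E_{m,\rho}}\circ\lambda=(f_m)_{E_{m,\rho}}$. Hence $\lambda$ is an isometry, and
$$\|t_j(\lambda(Q))\|_{\mc{M}^{\tau}_{E_{m,\rho}},v}=\|v_j(Q)\|_{\mc{M}_{E_{m,\rho}},v}\leq \|v_j\|_{\sup,v},$$
where the last step is just the definition of the sup norm, since $Q$ gives a point of $\mc{A}^{an}_v$.

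For the second inequality, write $v_j=\text{Tr}_\tau(\alpha_{a_j}u_{i_j})=\sum_{\sigma\in G}\delta_\sigma(\alpha_{a_j}u_{i_j})$. Lemma \ref{norms of twisted sections} gives, for each $\sigma$,
$$\|\delta_\sigma(\alpha_{a_j}u_{i_j})\|_{\sup,v}\leq|\sigma(\alpha_{a_j})|_vC_{w(v)}^{p^{2m}}.$$
The two cases then differ only in how the $|G|=[E_{m,\rho}:E]$ terms are summed.

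\emph{Case $v\nmid\infty$.} The sup norm is non-Archimedean: the canonical metric is ultrametric pointwise, so the norm of a sum is at most the maximum of the norms. Since $\sigma(\alpha_{a_j})$ is an algebraic integer, $|\sigma(\alpha_{a_j})|_v\leq 1$. Together these give $\|v_j\|_{\sup,v}\leq C_{w(v)}^{p^{2m}}$.

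\emph{Case $v\mid\infty$.} Use the ordinary triangle inequality over the $[E_{m,\rho}:E]$ terms. For each $\sigma$, $|\sigma(\alpha_{a_j})|_v=|\alpha_{a_j}|_{\sigma^{-1}v}$, possibly up to complex conjugation, which does not change absolute values. So it is again an Archimedean absolute value of $\alpha_{a_j}$, and the choice of the basis $\{\alpha_a\}$ via Lemma \ref{Minkowski bound} bounds it by $|D_{E_{m,\rho}}|^{1/[E_{m,\rho}:\QQ]}$. Summing gives the factor $[E_{m,\rho}:E]\,|D_{E_{m,\rho}}|^{1/[E_{m,\rho}:\QQ]}$.

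The main point needing care is the normalization of $|\cdot|_v$ at complex places, so that $|\cdot|_v$ is the usual absolute value in both Lemma \ref{Minkowski bound} and the metric, and the compatibility of the sup norms with the Galois transport used in Lemma \ref{norms of twisted sections}. I would verify that $\sigma$ permutes the Archimedean places of $E_{m,\rho}$ and that Lemma \ref{Minkowski bound} bounds every embedding. Once that holds, the estimate is immediate.
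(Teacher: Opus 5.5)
Your proposal is correct and follows the paper's proof. Like the paper, you expand $v_j=\text{Tr}_\tau(\alpha_{a_j}u_{i_j})$, bound each summand by Lemma~\ref{norms of twisted sections}, and then use integrality of $\sigma(\alpha_{a_j})$ with the ultrametric inequality at finite places, Lemma~\ref{Minkowski bound} with the triangle inequality over the $[E_{m,\rho}:E]$ terms at infinite places, and $t_j=(\lambda^*)^{-1}(v_j)$ for the first inequality; you simply make explicit the isometry of $\lambda$ for the canonical metrics, which the paper leaves implicit.
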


\begin{proof}
By definition, 
$$\displaystyle v_j:=\text{Tr}_{\tau}(\alpha_{a_j}u_{i_j})=\sum_{\sigma\in \Gal(E_{m,\rho}/E)}\delta_{\sigma}(\alpha_{a_j}u_{i_j}).$$ 
Since $\alpha_{a_j}$ is an algebraic integer, we have $|\sigma(\alpha_{a_j})|_v\leq 1$ for any $v\nmid\infty$.
For any $v\mid\infty$, by Lemma \ref{Minkowski bound}, we have 
$$|\sigma(\alpha_{a_j})|_v=|\alpha_{a_j}|_{\sigma^{-1}v}\leq |D_{E_{m,\rho}}|^{1/[E_{m,\rho}:\QQ]}.$$
We then obtain the desired upper bound for $\|v_j\|_{\sup,v}$ using Lemma \ref{norms of twisted sections} and the (resp. ultrametric) triangle inequality in the Archimedean (resp. non-Archimedean) case.
The first inequality follows from $t_j:= (\lambda^*)^{-1}(v_j)$.
\end{proof}

\subsection{Height comparison on twisted abelian varieties}\label{Height comparison on twisted abelian varieties} 

We first estimate several terms which will appear in the proof of the height comparison:

\begin{lemma}\label{discriminant bound}
We have $\log [E_{m,\rho}:E]\ll_g\log p^m$, $\log (M(m)+1)\ll_{\mc{A},\mc{K}}\log p^m$,
and $$\frac{\log |D_{E_{m,\rho}}|}{[E_{m,\rho}:\QQ]} \ll_{\mc{A},T}\log p^m.$$
\end{lemma}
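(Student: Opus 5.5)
The plan is to prove the three estimates separately, in increasing order of difficulty.

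For the first, Lemma \ref{bound of degree of extension} gives $[E_{m,\rho}:E]\leq |G_m|^{2g+1}=p^{2mg(2g+1)}$. Taking logarithms yields $\log[E_{m,\rho}:E]\leq 2g(2g+1)\log p^m$, which depends only on $g$.

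For the second, $M(m)+1=\dim H^0(\mc{A},\mc{K}^{\otimes 3p^{2m}})$. By Riemann--Roch on abelian varieties this equals $(\mc{K}^{\otimes 3p^{2m}})^g/g!=(3p^{2m})^g\,\mc{K}^g/g!$. Hence $\log(M(m)+1)\leq 2g\log p^m+\log(3^g\mc{K}^g/g!)$, and since $p^m\geq p\geq 2$ the constant is absorbed into a multiple of $\log p^m$ depending on $\mc{A},\mc{K}$.

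The discriminant bound is the main step. The extension $E_{m,\rho}/E$ is Galois, unramified outside $T$, and its Galois group embeds via $\theta_{m,\rho}$ into $G_m\rtimes\Aut(G_m)$. I will use the standard estimate (Serre's refinement of Hensel's bound, \emph{Corps locaux}): for a finite Galois extension $L/E$ of degree $n_L$ unramified outside $T$,
\[
\frac{\log|D_L|}{[L:\QQ]}\leq \frac{\log|D_E|}{[E:\QQ]}+\frac{1}{[E:\QQ]}\sum_{\mf{p}\in T}\Big(\log N\mf{p}\Big)\Big(1-\tfrac{1}{e_{\mf{p}}}+v_{\mf{p}}(e_{\mf{p}})\Big),
\]
where $e_{\mf{p}}$ is the ramification index of $\mf{p}$ in $L$. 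Equivalently, I will bound the different exponent at each prime above $\mf{p}$ by $e-1+e\,v_{\mf{p}}(e)$, and then use the conductor–discriminant formula relating the discriminant of $L$ to that of $E$.

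For $\mf{p}\in T$ not above $p$, the ramification is tame, so the local contribution is at most $\log N\mf{p}$, which is bounded by a constant depending on $T$. For $\mf{p}$ above $p$, the index $e_{\mf{p}}$ divides $[E_{m,\rho}:E]$. That degree divides $|G_m|\cdot|\mathbf{GL}_{2g}(\ZZ/p^m\ZZ)|$, whose $p$-part is a power of $p$ with exponent $O_g(m)$. Hence $v_{\mf{p}}(e_{\mf{p}})\leq e_{\mf{p}/p}\,v_p(e_{\mf{p}})\ll_{g,E} m$, while the prime-to-$p$ part of $e_{\mf{p}}$ contributes only $\log$-bounded terms. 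Summing over the finitely many primes of $T$ gives
\[
\frac{\log|D_{E_{m,\rho}}|}{[E_{m,\rho}:\QQ]}\leq \frac{\log|D_E|}{[E:\QQ]}+O_{T}(1)+O_{E,g,T}(m\log p).
\]
Since $m\geq 1$, the constants are absorbed and the right side is $\ll_{\mc{A},T}\log p^m$, as required.

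The main obstacle is stating the wild ramification bound cleanly. The key point is that the exponent of the wild part grows only linearly in $m$, because $v_p(e)$ is the logarithm of a number that is polynomial in $p^m$. If the root discriminant estimate is awkward to cite, the fallback is to bound the higher ramification filtration directly: the Galois group is $p$-adic analytic of level $p^m$, but the Hensel-type bound above suffices and is simpler.
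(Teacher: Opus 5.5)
Your proposal reaches the lemma, but one claim in the discriminant step is false as stated: primes $\mathfrak{p}\in T$ not above $p$ need not be tamely ramified in $E_{m,\rho}$.

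Since $\ker\theta_{m,\rho}\subseteq\ker\theta'_m$, the field $E_{m,\rho}$ contains $E(\mc{A}[p^m])$, and that field can be wildly ramified at a bad prime of residue characteristic $\ell\neq p$. For example, let $\mc{A}/\QQ$ be the quadratic twist by $\QQ(\sqrt{-1})$ of an elliptic curve with good reduction at $2$. Then inertia at $2$ acts on $\mc{A}[p]$ through $\{\pm 1\}$, so $2$ has ramification index $2$ in $\QQ(\mc{A}[p])$ for every odd $p$, and this ramification is wild. Group theory does not exclude this, since $|\bGL_{2g}(\ZZ/p^m\ZZ)|$ has prime factors other than $p$.

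The fix stays inside your own formula. For $\mathfrak{p}$ above $\ell\neq p$, you can argue in either of two ways:
\begin{itemize}
\item observe that $v_\ell(|G_m|\cdot|\bGL_{2g}(\ZZ/p^m\ZZ)|)=v_\ell(|\bGL_{2g}(\mathbb{F}_p)|)$ is independent of $m$, so the local term is still $O(1)$;
\item or simply use $v_{\mathfrak{p}}(e_{\mathfrak{p}})\le e_{\mathfrak{p}/\ell}\log[E_{m,\rho}:E]/\log\ell\ll_{E,g}\log p^m$, which is all the lemma needs.
\end{itemize}
Also, the formula you want relating $D_{E_{m,\rho}}$ and $D_E$ is transitivity of discriminants, $|D_L|=|D_E|^{[L:E]}N_{E/\QQ}(\mathfrak{d}_{L/E})$, not the conductor--discriminant formula.

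With that repair, your argument is correct, and your first estimate matches the paper's; the last two take a different route.

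For $M(m)$, the paper does not use Riemann--Roch. It notes that the Koizumi surjection factors through $\mathrm{Sym}^{p^{2m}}H^0(\mc{A},\mc{K}^{\otimes 3})$, giving $M(m)+1\le (p^{2m}+1)^{r-1}$ with $r=h^0(\mc{K}^{\otimes 3})$. Your exact count $(3p^{2m})^g\mc{K}^g/g!$ is sharper and equally valid.

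For the discriminant, the paper works absolutely over $\QQ$. It applies the Hensel bound $v_\ell(|D_{E_{m,\rho}}|)\le n-1+n\log n/\log\ell$, with $n=[E_{m,\rho}:\QQ]$, at each of the finitely many $\ell$ dividing $D_E$ or lying below $T$. It uses nothing about the Galois group beyond $[E_{m,\rho}:E]\le p^{2mg(2g+1)}$. That uniform estimate $v_\ell(e)\le \log e/\log\ell$ makes your split into primes above and not above $p$ unnecessary, and with it the tameness question. Your relative argument, using the $p$-adic valuation of the group order, gives finer information: the growth in $m$ comes only from primes above $p$. The paper's cruder bound is shorter and suffices here.
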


\begin{proof}
By Lemma \ref{bound of degree of extension}, we have $\log [E_{m,\rho}:E]\leq \log p^{2mg(2g+1)}\ll_g\log p^m$.
The surjection in \eqref{Koizumi} factors through the surjection
$$Sym^{p^{2m}}H^0(\mc{A},\mc{K}^{\otimes 3})\to H^0(\mc{A}, \mc{K}^{\otimes 3p^{2m}}).$$
Let $r$ be the dimension of $H^0(\mc{A},\mc{K}^{\otimes 3})$. Then
\begin{equation}\label{Koizumi and M(m) bound}
M(m)+1=\dim H^0(\mc{A},f_m^*\mc{K}^{\otimes 3})\leq \binom{p^{2m}+r-1}{r-1}\leq (p^{2m}+1)^{r-1},
\end{equation}
so $\log (M(m)+1)\ll_{\mc{A},\mc{K}}\log p^m$.
By  \cite[III. Theorem 2.6]{Neu99}, if $\ell$ is a prime number dividing $D_E$ or lying below a prime in $T$, 
then the $\ell$-adic valuation $v_{\ell}(|D_{E_{m,\rho}}|)$ is at most 
$$[E_{m,\rho}:\QQ]-1+[E_{m,\rho}:\QQ]\log ([E_{m,\rho}:\QQ])/\log \ell\ll_{T}[E_{m,\rho}:\QQ]\log [E_{m,\rho}:\QQ].$$
Let $\ell_0$ be the largest prime among the primes dividing $D_E$ or lying below a prime in $T$.
Since $E_{m,\rho}$ is unramified outside $T$, we have 
$$\frac{\log |D_{E_{m,\rho}}|}{[E_{m,\rho}:\QQ]}\ll_{T}(\log [E_{m,\rho}:\QQ])\cdot \log \ell_0\ll_{\mc{A},T}1+\log [E_{m,\rho}:E]\ll_{\mc{A},T}\log p^m.$$
\end{proof}

By Lemma \ref{Galois cohomology bound}, to complete the proof of Theorem \ref{abelian variety, height comparison}, it suffices to prove the following: 

\begin{thm}\label{height comparison, abelian}
For any $Q\in \mc{A}(E_{m,\rho})$,  we have
$$h_{\ol{\ms{M}^{\tau}}}(\lambda(Q))\leq \widehat{h}_{\mc{M}^{\tau}_{E_{m,\rho}}}(\lambda(Q))+O_{\mc{A},T, \mc{K}}((\deg f_m)^{1/g}).$$
The tower $\mf{T}$ formed by the morphisms $f_m$ satisfies height comparison on twists.
\end{thm}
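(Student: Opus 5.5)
The plan is to start from Lemma \ref{Arakelov and NT height identity}, applied to the point $P=\lambda(Q)\in\mc{A}^{\tau}(E_{m,\rho})$. It writes $h_{\ol{\ms{M}^{\tau}}}(\lambda(Q))$ as $\widehat{h}_{\mc{M}^{\tau}_{E_{m,\rho}}}(\lambda(Q))$ plus a local correction. The correction is a normalized sum over places $v$ of $E_{m,\rho}$ of $\log\max_i\|t_i(\lambda(Q))\|_v$ (non-Archimedean) and $\log(\sum_i\|t_i(\lambda(Q))\|_v^2)^{1/2}$ (Archimedean). It therefore suffices to bound this correction from above by $O_{\mc{A},T,\mc{K}}(p^{2m})=O((\deg f_m)^{1/g})$, since $\deg f_m=p^{2mg}$.

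\textbf{Bounding the local terms.} I will use Lemma \ref{norm bound for twisted sections}.
\begin{itemize}
\item At $v\nmid\infty$, each term is at most $p^{2m}\log C_{w(v)}$.
\item At $v\mid\infty$, each term is at most $\tfrac12\log(M(m)+1)+\log[E_{m,\rho}:E]+\frac{\log|D_{E_{m,\rho}}|}{[E_{m,\rho}:\QQ]}+p^{2m}\log C_{w(v)}$.
\end{itemize}
Summing with weights $n_v/[E_{m,\rho}:\QQ]$, the places above a fixed $w$ of $E$ contribute total weight $n_w/[E:\QQ]$. So the $C$-part contributes exactly $p^{2m}c_{\mf{B}}$, which is finite by Lemma \ref{finite log sup norm sum}. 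The Archimedean weights total $1$, so the remaining part is bounded by Lemma \ref{discriminant bound} by $O_{\mc{A},T,\mc{K}}(\log p^m)$. Altogether the correction is at most $p^{2m}c_{\mf{B}}+O(m\log p)\ll(\deg f_m)^{1/g}$, which proves the first assertion.

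\textbf{Checking the three conditions of Definition \ref{height comparison on twists}.} Take $n=g$, $L=\mc{K}^{\otimes 3}$, and the model $\ms{A}^{\tau}_{\mc{O}_E}$ with embedding $\iota^{\tau}$.
\begin{itemize}
\item The dimension condition $M(m)\le\poly(\deg f_m)$ follows from \eqref{Koizumi and M(m) bound}.
\item The line bundle condition $f_m^{\tau*}L\simeq\iota^{\tau*}\mc{O}(1)_E$ holds by construction, since the $t_i$ are sections of $\mc{M}^{\tau}=f_m^{\tau*}\mc{K}^{\otimes3}$.
\item For the height condition, let $P\in\mc{A}^{\tau}(E)$. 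Write $P=\lambda(Q)$ with $Q=\lambda^{-1}(P)\in\mc{A}(E_{m,\rho})$, and use that heights are invariant under base change. Then $\widehat{h}_{\mc{M}^{\tau}}(P)=\widehat{h}_{\mc{K}^{\otimes3}}(f_m^{\tau}(P))$, because Néron–Tate heights are functorial under $f^{\tau}_m$ via the identity $(f_m^{\tau})\circ\lambda=[p^m]$ (equivalently, canonical metrics pull back). Next, $\widehat{h}_{\mc{K}^{\otimes 3}}$ differs from the Weil height $h_L$ by a constant depending only on $\mc{A},\mc{K}$, so $\widehat{h}_{\mc{K}^{\otimes3}}(f_m^{\tau}(P))\le \log B+O(1)$ whenever $H_L(f_m^{\tau}(P))\le B$.
\end{itemize}
Combining these gives $h_{\iota^{\tau*}\ol{\mc{O}(1)}}(P)<C_1+C_2\log B+C_3(\deg f_m)^{1/g}$.

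\textbf{Main obstacle.} The delicate point is the normalization: ensuring that the Archimedean discriminant and degree terms enter only through their averages over places, and that the $\log C_w$ terms scale linearly in $p^{2m}$ and not worse. Both rest on the per-place bounds of Lemma \ref{norm bound for twisted sections} together with Lemma \ref{discriminant bound}. Another point to check carefully is that $\widehat{h}$ here is taken with respect to the metrized canonical structure, so that the identity of Lemma \ref{Arakelov and NT height identity} applies without extra constants.
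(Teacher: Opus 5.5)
Your proposal is correct and follows essentially the same route as the paper. You combine the identity of Lemma \ref{Arakelov and NT height identity} with the per-place bounds of Lemma \ref{norm bound for twisted sections}, use the weight identity $\sum_{v\mid w}n_v=n_w[E_{m,\rho}:E]$ to get $p^{2m}c_{\mf{B}}$, invoke Lemma \ref{discriminant bound} for the Archimedean remainder, and finally take $L=\mc{K}^{\otimes 3}$ with $\widehat{h}_{\mc{M}^{\tau}}(P)=\widehat{h}_L(f_m^{\tau}(P))\le \log B+O(1)$; your explicit remarks on writing $P=\lambda(\lambda^{-1}(P))$ with base-change invariance, and on the bounded difference between N\'eron--Tate and Weil heights, just spell out steps the paper leaves implicit.
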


\begin{proof}
For any place $w$ of $E$, we have $\displaystyle \sum_{v\mid w}n_v =n_w[E_{m,\rho}:E]$, so
$$\frac{1}{[E_{m,\rho}: \QQ]}\sum_{v\in M_{E_{m,\rho}}}n_v \log C_{w(v)}=\frac{1}{[E:\QQ]}\sum_{w\in M_E}n_w \log C_w=c_{\mf{B}}.$$
Recall that $\deg f_m=p^{2mg}$. By Lemma \ref{Arakelov and NT height identity}, Lemma \ref{norm bound for twisted sections} and Lemma \ref{discriminant bound},
\begin{align*}
h_{\ol{\ms{M}^{\tau}}}(\lambda(Q))
&= \widehat{h}_{\mc{M}^{\tau}_{E_{m,\rho}}}(\lambda(Q))
+\frac{1}{[E_{m,\rho}:\QQ]}\sum_{v\nmid\infty}n_v\log \max_{0\leq i\leq M(m)}\|t_i(\lambda(Q))\|_{\mc{M}^{\tau}_{E_{m,\rho}},v}
\\&\quad\quad\quad\quad\quad\quad+\frac{1}{[E_{m,\rho}:\QQ]}\sum_{v\mid\infty} n_v\log\left(\sum_{i=0}^{M(m)}\|t_i(\lambda(Q))\|^2_{\mc{M}^{\tau}_{E_{m,\rho}},v}\right)^{1/2}
\\&\leq \widehat{h}_{\mc{M}^{\tau}_{E_{m,\rho}}}(\lambda(Q))+p^{2m}c_{\mf{B}}+\log [E_{m,\rho}:E]+\frac{\log |D_{E_{m,\rho}}|}{[E_{m,\rho}:\QQ]}+\frac{1}{2}\log (M(m)+1)
\\&\leq \widehat{h}_{\mc{M}^{\tau}_{E_{m,\rho}}}(\lambda(Q))+O_{\mc{A},T, \mc{K}}((\deg f_m)^{1/g}).
\end{align*}

Take $L$ to be $\mc{K}^{\otimes 3}$, which depends only on $\mc{A}$.
By \eqref{Koizumi and M(m) bound}, we have $M(m)\leq \poly_{\mc{A}}((\deg f_m)^{1/g})$.
We have $f_m^{\tau*}L=\mc{M}^{\tau}\simeq \iota^{\tau*}_E\mc{O}(1)$.
For any $B>e$ and $P\in \mc{A}^{\tau}(E)$ with exponential Weil height $H_{L}(f^{\tau}_m(P))\leq B$, we have  
\begin{align*}
h_{\ol{\ms{M}^{\tau}}}(P)
&\leq \widehat{h}_{\mc{M}^{\tau}}(P)+ O_{\mc{A},T}((\deg f_m)^{1/g})
\\&=\widehat{h}_{L}(f_m^{\tau}(P))+ O_{\mc{A},T}((\deg f_m)^{1/g})
\\&<C_1+\log B+ C_2(\deg f_m)^{1/g},
\end{align*}
where $C_1$ and $C_2$ are some constants depending only on $\mc{A}$ and $T$.
\end{proof}

\end{document}